\newtheorem{theorem}{Theorem}
\newtheorem{prop}[theorem]{Proposition}
\newtheorem{lemma}[theorem]{Lemma}
\newtheorem{cor}[theorem]{Corollary}
\newtheorem*{claim}{Claim}
\theoremstyle{remark}
\newtheorem{remark}{Remark}
\newtheorem{example}{Example}
\theoremstyle{definition}
\newtheorem{definition}{Definition}
\setlist[itemize,enumerate]{itemsep=0pt,topsep=0pt}
\renewcommand{\hat}{\widehat}
\renewcommand{\phi}{\varphi}
\renewcommand{\epsilon}{\varepsilon}
\DeclareMathOperator{\supp}{supp}
\newcommand{\Z}{\mathbb Z}
\newcommand{\C}{\mathbb C}
\title{Sampling from the low temperature Potts model through a Markov chain on flows}
\author{Jeroen Huijben\thanks{Korteweg de Vries Institute for Mathematics, Amsterdam. Email \texttt{j.huijben@uva.nl}}, Viresh Patel\thanks{Korteweg de Vries Institute for Mathematics, Amsterdam. Email \texttt{v.s.patel@uva.nl}. Partially supported by Netherlands Organisation for Scientific
Research (NWO) through the Gravitation Programme Networks (024.002.003).}, Guus Regts\thanks{Korteweg de Vries Institute for Mathematics, Amsterdam. Email \texttt{guusregts@gmail.com}. Partially supported by grant VI.Vidi.193.068 from the Dutch Research Council (NWO).}
}
\begin{document}

\maketitle
\begin{abstract}
In this paper we consider the algorithmic problem of sampling from the Potts model and computing its partition function at low temperatures.
Instead of directly working with spin configurations, we consider the equivalent problem of sampling flows.
We show, using path coupling, that a simple and natural Markov chain on the set of flows is rapidly mixing.
As a result we find a $\delta$-approximate sampling algorithm for the Potts model at low enough temperatures, whose running time is bounded by $O(m^2\log(m\delta^{-1}))$ for graphs $G$ with $m$ edges.
\end{abstract}

\footnotesize{{\bf Keywords:} ferromagnetic Potts model, flows, Glauber dynamics, partition function.}

\section{Introduction}
Let $G=(V,E)$ be a graph and let $[q]\coloneqq\{1,\ldots,q\}$ be a set of spins or colours for an integer $q\geq2$. A function $\sigma:V\to [q]$ is called a $q$-spin configuration or colouring.
The Gibbs measure of the $q$-state Potts model on $G=(V,E)$ is a probability distribution on the set of all $q$-spin configurations $\{\sigma:V\to [q]\}$. 
For an \emph{interaction parameter} $w>0$, the Gibbs distribution $\mu_{\mathrm{Potts}}\coloneqq\mu_{\mathrm{Potts},G;q,w}$ is defined by 
\begin{equation}\label{eq:prob pots}
    \mu_{\mathrm{Potts}}[\sigma]\coloneqq\frac{w^{m(\sigma)}}{\sum_{\tau:V \to [q]} w^{m(\tau)}},
\end{equation}
where, for a given $q$-spin configuration $\tau$, $m(\tau)$ denotes the number of edges $\{u,v\}$ of $G$ for which $\tau(u)=\tau(v)$.
The denominator of the fraction~\eqref{eq:prob pots} is called the \emph{partition function of the Potts model} and is denoted by $Z_{\mathrm{Potts}}(G;q,w)$.

The regime $w\in(0,1)$ is known as the anti-ferromagnetic Potts model, and $w \in (1,\infty)$ as the ferromagnetic Potts model. Furthermore, values of $w$ close to 1 are referred to as high temperature, whereas values close to 0 or infinity are referred to as low temperature. This comes from the physical interpretation in which one writes $w= e^{J\beta}$ with $J>0$ being the interaction energy between same spin sites and $\beta$ the inverse temperature.

We will be concerned with the algorithmic problem of approximately sampling from $\mu_{\mathrm{Potts}}$ as well as approximately computing $Z =Z_{\mathrm{Potts}}(G;q,w)$ for $w$ close to infinity (that is in the low temperature ferromagnetic regime).  Given error parameters $\epsilon,\delta \in (0, 1)$, an $\epsilon$-approximate counting
algorithm for $Z$ outputs a number $Z'$ so that $(1- \epsilon) \leq Z/Z' \leq (1 + \epsilon)$, and a $\delta$-approximate sampling
algorithm for $\mu=\mu_\mathrm{Potts}$ outputs a random sample $I$ with distribution $\hat{\mu}$ so that the total variation distance
satisfies  $\|\mu-\hat{\mu}\|_{\mathrm{TV}}\leq \delta$.

It was shown in \cite{GSVY16} that, for graphs of a fixed maximum degree $\Delta \geq 3$, there is a critical parameter $w_\Delta>1$, corresponding to a phase transition of the model on the infinite $\Delta$-regular tree, such that approximating the partition function is computationally hard\footnote{Technically they showed that the problem is \#BIS hard, a complexity class introduced in~\cite{DGGJBIS} and known to be as hard as \#BIS, that is the problem of counting the number of independent sets in a bipartite graph. The exact complexity of \#BIS is unknown, but it is believed that no fully polynomial time randomised approximation scheme exists for \#BIS, but also that \#BIS  is not \#P-hard}. 
This result indicates that it might be hard to compute the partition function of the ferromagnetic Potts model for large values of $w$. 
However, recently several results emerged, showing that for certain finite subgraphs of $\mathbb{Z}^d$~\cite{BR19,HPR20,BCHPT20} as well as $\Delta$-regular graphs satisfying certain expansion properties~\cite{JKP20,HJP20,CGGPSV21} it is in fact possible to approximate the partition function of the ferromagnetic Potts model for $w$ large enough. In fact the algorithms in~\cite{BCHPT20,HJP20} even work for all values $w\geq 1$ under the assumption that the number of colours, $q$, is suitably large in terms of the maximum degree.
The running times of all these aforementioned algorithms are  polynomial in the number of vertices of the underlying graph, but typically with a large exponent. The exception is \cite{CGGPSV21}, in which the cluster expansion techniques of \cite{JKP20} for expander graphs are extended to a Markov chain setting giving running times of the form $O(n^2 \log n)$ for approximating the partition function, where $n$ is the number of vertices of the input graph.

In this paper we present Markov chain based algorithms for approximating the partition function of the ferromagnetic Potts model at sufficiently low temperatures with similar running times as \cite{CGGPSV21}. While most results in this area focus on graphs of bounded maximum degree, the graph parameters of interest for us are different and so our methods, as well as being able to handle subgraphs of the grid $\mathbb{Z}^d$ (although not for all temperatures), can also handle certain graphs classes of unbounded degree (cf. Lemma~\ref{lem:contraction is good}). The parameters of interest for us are in fact similar to those in \cite{BR19}; here we achieve  better running times for our algorithms, while \cite{BR19} achieves better parameter dependencies. 

We show how to efficiently generate a sample from the Potts model using a rapidly mixing Markov chain and then use this to approximate the partition function. The Markov chain however is not supported on $q$-spin configurations\footnote{See e.g.~\cite{BGP16} for an analysis of the usual Glauber dynamics for the ferromagnetic Potts model (at high temperatures).} but on flows taking values in $\Z_q\coloneqq\Z/q\Z$. 
For planar graphs, this Markov chain on flows may be interpreted as Glauber dynamics of $q$-spin configurations on the dual graph; see Section~\ref{se:slow} for an example of this.
We use this Markov chain on flows together with another trick to show that we can efficiently approximate a certain partition function on flows at high temperatures, which in turn can be used to approximate the Potts partition function at low temperatures. Below we state our main results.

\subsection{Main results}
To state our main results, we need some definitions. 
In the present paper we deal with multigraphs and the reader should read multigraph whenever the word graph is used.
A graph is called \emph{even} if all of its vertices have even degree. In what follows we often identify a subgraph of a given graph with its edge set.

Given a graph $G$, fix an arbitrary orientation of its edges. For any even subgraph $C$ of $G$, we can associate to it a signed indicator vector $\chi_C\in \mathbb{Z}^E$ as follows: choose an Eulerian orientation of (each of the components of) $C$. Then for $e\notin C$ we set $\chi_C(e)=0$ and for $e\in C$, we set $\chi_C(e)=1$ if $e$ has the same direction in both $C$ and $G$, and we set $\chi_C(e)=-1$ otherwise. We often abuse notation and identify the indicator vector $\chi_C$ with the set of edges in $C$.
A \emph{$\mathbb{Z}$-flow}, is a map $f:E\to \mathbb{Z}$ satisfying
\[
\sum_{e:\, e \text { directed into } v}f(e)=\sum_{e:\, e \text{ directed out of }v}f(e) \quad \text{ for all $v\in V$}.
\]
We denote the collection of $\mathbb{Z}$-flows by $\mathcal{F}(G)$; note that $\mathcal{F}(G)$ with the obvious notion of addition is known as the first homology group of $G$, and also as the cycle space of $G$. 
Clearly, when viewing $\chi_C$ as a function on $E$, we have $\chi_C\in \mathcal{F}(G)$ for any even subgraph $C$.
It is well known that $\mathcal{F}(G)$ has a generating set (as a $\mathbb{Z}$-module) consisting of indicator vectors of even subgraphs; see e.g. \cite[Section 14]{GRbook}.\footnote{In fact there is even a basis consisting of indicator functions of cycles. For later purposes we however need to work with even subgraphs.}
We call such a generating set an \emph{even generating set} for the cycle space.

Let $\mathcal{C}$ be an even generating set of $\mathcal{F}(G)$; we define some parameters associated to $\mathcal{C}$ (see below for some examples of even generating sets and associated parameters).
For $C\in \mathcal{C}$, let $d(C)\coloneqq|\{D\in \mathcal{C}\setminus \{C\}\mid C\cap D\neq \emptyset\}|$, and let 
\begin{equation}\label{eq:def d}
 d(\mathcal{C})\coloneqq\max \{d(C)\mid C\in \mathcal{C}\}.   
\end{equation}
We write 
\begin{equation}\label{eq:def iota}
\iota(\mathcal{C})\coloneqq\max \{|C_1\cap C_2| \mid C_1,C_2\in \mathcal{C} \text{ with } C_1\neq C_2\}.    
\end{equation} 
Define 
\begin{equation}\label{eq:def ell}
 \ell(\mathcal{C})\coloneqq \max\{|C|\mid C\in \mathcal{C}\}.
\end{equation}

Finally, for an edge $e\in E$, define $s(e)$ to be the number of even subgraphs $C\in \mathcal{C}$ that $e$ is contained in and 
\begin{equation}\label{eq:def s}
s\mathcal(\mathcal{C})\coloneqq\max\{s(e)\mid e\in E\}.    
\end{equation}



We now present our approximate sampling and counting results. All of our results are based on randomised algorithms that arise from running Markov chains. For us, simulating one step of these Markov chains always includes choosing a random element from a set of $t$ elements with some (often uniform) probability distribution, where $t$ is at most polynomial in the size of the input graph. We take the time cost of such a random choice to be $O(1)$ as in the (unit-cost) RAM model of computation; see e.g.\  \cite{MotRag}. 

Our main sampling results read as follows.
\begin{theorem}\label{thm:main sampling}
Fix a number of spins $q\in \mathbb{N}_{\geq 2}$.
\begin{itemize}
\item[(i)] Fix integers $d\geq 2$ and $\iota\geq 1$ and let $\mathcal{G}$ be the set of graphs $G=(V,E)$ for which we have an even generating set $\mathcal{C}$ for $G$ of size $O(|E|)$ such that $d(\mathcal{C}) \leq d$ and $\iota(\mathcal{C})\leq \iota$. 
For any $w>\tfrac{(d+1)\iota }{2}q-(q-1)$ and $\delta \in (0,1)$,
there exists a $\delta$-approximate sampling algorithm for $\mu_{\mathrm{Potts},G;q,w}$, on all $m$-edge graphs  $G \in \mathcal{G}$ 
with running time $O(m^2\log(m\delta^{-1}))$.
\item[(ii)]
Fix integers $\ell\geq 3$ and $s\geq 2$
and let $\mathcal{G}$ be the set of graphs $G=(V,E)$ for which we have an even generating set
$\mathcal{C}$ for $G$ of size $O(|E|)$ such that $\ell(\mathcal{C}) \leq \ell$ and $s(\mathcal{C})\leq s$.
For any $w>(q-1)(\ell s-1)$ and $\delta \in (0,1)$ 
there exists a $\delta$-approximate sampling algorithm for $\mu_{\mathrm{Potts},G;q,w}$ on all $m$-edge graphs $G \in \mathcal{G}$ 
with running time $O(m\log(m\delta^{-1}))$.
\end{itemize}
\end{theorem}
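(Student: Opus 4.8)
The plan is to move from $q$-spin configurations to $\Z_q$-flows, prove rapid mixing of a natural heat-bath chain on flows by path coupling, and transfer samples back. \emph{First, the reduction to flows.} Using the Fourier expansion of $s\mapsto w^{[s=0]}$ over the characters of $\Z_q$ (Fourier coefficients $\tfrac{w+q-1}{q}$ at $0$ and $\tfrac{w-1}{q}$ elsewhere) and summing over $\sigma$, one gets
\[
Z_{\mathrm{Potts}}(G;q,w)=q^{|V|-|E|}(w-1)^{|E|}\sum_{f\in\mathcal{F}_q(G)}y^{z(f)},\qquad y\coloneqq\frac{w+q-1}{w-1},
\]
where $\mathcal{F}_q(G)$ is the set of $\Z_q$-flows and $z(f)\coloneqq|\{e\in E:f(e)=0\}|$; write $\mu_{\mathrm{flow}}[f]\propto y^{z(f)}$ for the associated Gibbs measure on $\mathcal{F}_q(G)$. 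Note $w\to\infty$ corresponds to $y\to 1^{+}$, the ``high-temperature'' end for $\mu_{\mathrm{flow}}$, which is exactly where path coupling should succeed. I would then show (the ``trick'' alluded to in the introduction) that an approximate sample from $\mu_{\mathrm{flow}}$ can be converted into an approximate sample from $\mu_{\mathrm{Potts}}$ with $O(m)$ post-processing, so that it suffices to sample from $\mu_{\mathrm{flow}}$.

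\emph{Next, the chain.} Fix an even generating set $\mathcal{C}$ for the cycle space with $|\mathcal{C}|=O(m)$. From $f$, the chain $\mathcal{M}$ picks $C\in\mathcal{C}$ uniformly and replaces $f$ by $f+a\chi_C$, where $a\in\Z_q$ is drawn with probability proportional to $y^{z(f+a\chi_C)}$. Since the $\chi_C$ generate $\mathcal{F}_q(G)$ over $\Z_q$, $\mathcal{M}$ is irreducible, and each coordinate update being a Gibbs update it is reversible with respect to $\mu_{\mathrm{flow}}$ (for planar $G$ this is exactly Glauber dynamics of the dual Potts model). For path coupling I would use the move graph with $f\sim f'$ iff $f-f'=a\chi_C$ for some $C\in\mathcal{C}$ and $a\ne0$; since any flow is a $\Z_q$-combination of the $\chi_C$, this graph has diameter at most $2|\mathcal{C}|=O(m)$.

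\emph{The heart is the path coupling.} Take $f$ and $f'=f+a\chi_C$ adjacent, and couple one step of $\mathcal{M}$ from each by choosing the same $C'\in\mathcal{C}$. If $C'=C$, the law of the resulting flow agrees from $f$ and from $f'$ (reparametrising the resampled value by $a$), so couple to coalesce: distance $1\to0$. If $C'\cap C=\emptyset$, the two heat-bath laws on the $C'$-increment agree (they depend on $f,f'$ only through their common restriction to $C'$, up to a $b$-independent constant), so make the same move: distance stays $1$. If $C'\ne C$ but $C'\cap C\ne\emptyset$ --- at most $d(C)\le d$ such $C'$ --- the two laws on the $C'$-increment differ, but only through the $\le|C\cap C'|\le\iota$ edges of $C\cap C'$; coupling them optimally, with probability at most the total variation distance $\rho_{C'}$ the increments differ, in which case the distance becomes at most $2$. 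Hence
\[
\mathbb{E}[\text{distance after one step}]\le 1-\frac{1}{|\mathcal{C}|}\Bigl(1-2\!\!\sum_{C':\,C'\cap C\ne\emptyset,\ C'\ne C}\!\!\rho_{C'}\Bigr).
\]
A short computation comparing $z(f+b\chi_{C'})$ with $z(f'+b\chi_{C'})$ edge by edge gives $\rho_{C'}\lesssim\tfrac{q}{w+q-1}\,|C\cap C'|$; bounding $|C\cap C'|\le\iota$ and the number of bad $C'$ by $d$ yields the threshold in part~(i), namely $w>\tfrac{(d+1)\iota}{2}q-(q-1)$, whereas in part~(ii) one keeps the sum, $\sum_{C'}\rho_{C'}\lesssim\tfrac{q}{w+q-1}\sum_{e\in C}(s(e)-1)\le\tfrac{q}{w+q-1}\,\ell(s-1)$, which is $<1$ exactly when $w>(q-1)(\ell s-1)$. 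Either way the contraction rate is $\gamma=\Omega(1/|\mathcal{C}|)=\Omega(1/m)$.

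\emph{Finally, assembly.} Path coupling bounds the mixing time by $O(\gamma^{-1}\log(\mathrm{diam}/\delta))=O(m\log(m\delta^{-1}))$ steps; simulating one step costs $O(|C|)$ to evaluate $z(f+a\chi_C)$ over $a\in\Z_q$, which is $O(m)$ in case~(i), giving $O(m^{2}\log(m\delta^{-1}))$, and $O(\ell)=O(1)$ in case~(ii), giving $O(m\log(m\delta^{-1}))$; the $O(m)$ post-processing from the reduction is absorbed. I expect the \textbf{main obstacle} to be the third step: pushing the total-variation bound $\rho_{C'}$ to be sharp enough to reach the stated thresholds (in particular the edge-wise summation that replaces $d\iota$ by $\ell s$ in part~(ii)), together with making the flow-to-Potts conversion in Step~1 both correct and cheap.
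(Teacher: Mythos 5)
Your overall architecture for part (i) is the same as the paper's: rewrite $Z_{\mathrm{Potts}}$ as a flow partition function (your identity with $y=\tfrac{w+q-1}{w-1}=1/x$ matches the paper's Lemma on $Z_{\mathrm{flow}}$), run the heat-bath chain over an even generating set, and apply path coupling with the three cases $C'=C$, $C'\cap C=\emptyset$, $C'\cap C\neq\emptyset$. The genuine gap is exactly the step you flag as "a short computation": the total-variation bound for the two $C'$-increment laws. The paper needs, and proves via a nontrivial optimization lemma (the "sum of minima" lemma), the bound $\rho_{C'}\leq\frac{1-x^{\iota}}{1+x^{\iota}}$; the denominator $1+x^{\iota}\approx 2$ near $x=1$ is precisely what turns the contraction condition into $x^{\iota}>\frac{d-1}{d+1}$ and hence $w>\tfrac{(d+1)\iota}{2}q-(q-1)$. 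Your asserted bound $\rho_{C'}\lesssim(1-x)\,|C\cap C'|$ (even with the implicit constant equal to $1$, and even after fixing the spurious factor $2$ in your expected-distance inequality, since a differing increment raises the distance from $1$ to at most $2$) only yields contraction when $d\iota(1-x)<1$, i.e.\ $w>d\iota q-(q-1)$, which is strictly weaker than the stated threshold for all $d\geq2$. So as written the argument does not prove part (i); the missing ingredient is a genuinely sharper estimate, of the type $\rho_{C'}\leq\frac{1-x^{|C\cap C'|}}{1+x^{|C\cap C'|}}$, whose proof (an exchange argument reducing to one "$b$-minimal" index plus an AM--GM estimate) is the technical heart of the paper. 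The flow-to-Potts conversion you defer is also left unproved, though it is the standard coupling (add each edge outside $\supp(f)$ with probability $x$ to get a random-cluster sample, then Edwards--Sokal), and is comparatively routine.

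For part (ii) the situation is worse: your edge-wise summation $\sum_{C'}\rho_{C'}\lesssim\frac{q}{w+q-1}\,\ell(s-1)$ being less than $1$ is equivalent to $w>q\ell(s-1)-(q-1)$, not to the stated $w>(q-1)(\ell s-1)$; the two differ by $\ell(s-q)$ and coincide only when $s=q$, so the claimed "exactly when" is false and for $s>q$ your range is strictly smaller than the theorem's (for $s<q$ it would be a stronger claim than the paper's, which your unproved TV bound cannot support). The paper does not obtain part (ii) from the flow chain at all: it introduces a joint flow--random-cluster chain on pairs $(f,F)$ with $\supp(f)\subseteq F\subseteq E$, alternating single-edge updates of $F$ with flow updates along generators contained in $F$, and a separate path coupling (with an optimized mixture parameter $p$) gives mixing precisely for $x>1-\frac{q}{(q-1)\ell s}$, i.e.\ $w>(q-1)(\ell s-1)$, with $O(\ell)=O(1)$ work per step and $O(m\log(m\delta^{-1}))$ total time. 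So either adopt that joint chain, or redo your flow-chain analysis with a correctly proved sharp bound and accept that the threshold you obtain will not be the one stated in the theorem.
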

While parts (i) and (ii) are not directly comparable, we note that when $\iota=1$, part (i) has a better range for $w$.

Our main approximate counting results read as follows.
\begin{theorem}\label{thm:main counting}
Fix a number of spins $q\in \mathbb{N}_{\geq 2}$.
\begin{itemize}
\item[(i)]
Fix integers $d\geq 2$ and $\iota\geq 1$ and let $\mathcal{G}$ be the set of graphs $G=(V,E)$ for which we have an even generating set
$\mathcal{C}$ for $G$ of size $O(|E|)$ such that $d(\mathcal{C}) \leq d$ and $\iota(\mathcal{C})\leq \iota$. For $w>\tfrac{(d+1)\iota }{2}q-(q-1)$ and $\epsilon \in (0,1)$,
there exists a randomised $\epsilon$-approximate counting algorithm for $Z_{\mathrm{Potts}}(G;q,w)$ on all $n$-vertex and $m$-edge graphs  $G \in \mathcal{G}$ 
that succeeds with probability at least $3/4$ and has
 running time  $O(n^2m^2\epsilon^{-2}\log(nm\epsilon^{-1}))$.
\item[(ii)]
Fix integers $\ell\geq 3$ and $s\geq 2$ and let $\mathcal{G}$ be the set of graphs $G=(V,E)$ for which we have an even generating set
$\mathcal{C}$ for $G$ of size $O(|E|)$ such that $\ell(\mathcal{C}) \leq \ell$ and $s(\mathcal{C})\leq s$.
For any $w>(q-1)(\ell s-1)$ and $\epsilon \in (0,1)$ 
there exists a randomised $\epsilon$-approximate counting algorithm for $Z_{\mathrm{Potts}}(G;q,w)$ on all $n$-vertex and $m$-edge graphs $G \in \mathcal{G}$ 
that succeeds with probability at least $3/4$ and has running time  $O(n^2m \epsilon^{-2}\log(nm\epsilon^{-1}))$.
\end{itemize}
\end{theorem}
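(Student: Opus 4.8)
The plan is to reduce the approximate counting problem to the approximate sampling problem from Theorem~\ref{thm:main sampling} via the standard telescoping-product (simulated annealing) method, and then bound the variance of the resulting estimator carefully enough to extract the stated running times.

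First I would set up the telescoping product. Since $Z_{\mathrm{Potts}}(G;q,1)=q^n$ and the sampling results (together with the flow reformulation alluded to in the introduction) give us a handle on the partition function on flows, I would choose a sequence of interaction parameters $1=w_0<w_1<\dots<w_k=w$ (or work on the flow side with a sequence of "temperatures" interpolating between the trivial point and the target), and write $Z_{\mathrm{Potts}}(G;q,w)$ as $Z_{\mathrm{Potts}}(G;q,w_0)$ times a product of ratios $Z_{\mathrm{Potts}}(G;q,w_{i+1})/Z_{\mathrm{Potts}}(G;q,w_i)$. Each ratio is the expectation of a bounded random variable ($w_{i+1}^{m(\sigma)}/w_i^{m(\sigma)}$, or its flow analogue) under the Gibbs measure at parameter $w_i$, which we can estimate by drawing approximate samples using Theorem~\ref{thm:main sampling}. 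The number of phases $k$ should be taken of order $O(n)$ (a constant number of interpolation steps per edge suffices to keep each ratio bounded by a constant, since $m(\sigma)\le m = O(n)$ for graphs in $\mathcal{G}$, as the even generating set has size $O(|E|)$ and bounded local structure forces $|E|=O(|V|)$), so that the relative variance of each single-phase estimator is $O(1)$.

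Next I would do the standard error analysis: to get an $\epsilon$-relative-error estimate of the product of $k$ ratios with success probability $3/4$, it suffices to estimate each ratio to relative error $O(\epsilon/\sqrt k)$ with high enough probability, which by Chebyshev requires $O(k\epsilon^{-2})$ samples per phase (a further $O(\log k)$ or median-of-means boost handles the failure probability; this only affects logarithmic factors). The bias introduced by using $\delta$-approximate rather than exact samples is controlled by choosing $\delta$ polynomially small, say $\delta = \Theta(\epsilon/(nm))$ or similar, which costs only a $\log(nm\epsilon^{-1})$ factor inside the per-sample running time. Multiplying out: $k=O(n)$ phases, $O(n\epsilon^{-2})$ samples each, and cost per sample $O(m^2\log(nm\epsilon^{-1}))$ in case (i) or $O(m\log(nm\epsilon^{-1}))$ in case (ii), giving the claimed $O(n^2m^2\epsilon^{-2}\log(nm\epsilon^{-1}))$ and $O(n^2m\epsilon^{-2}\log(nm\epsilon^{-1}))$ bounds respectively. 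There is one subtlety: the Markov chain lives on flows and approximates a flow partition function at high temperature, so I must carry the reduction between the flow partition function and $Z_{\mathrm{Potts}}$ through the whole annealing argument — presumably the duality/substitution $w\mapsto$ (flow parameter) established earlier lets one transport the telescoping product to the flow side, where the relevant ratios are again bounded.

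The main obstacle I expect is verifying that every ratio in the telescoping product is bounded above and below by absolute constants with a number of phases that is only $O(n)$, and simultaneously that the second moment of each single-phase estimator is comparable to the square of its mean (bounded relative variance). This is where the combinatorial parameters of $\mathcal{C}$ and the condition $w>\tfrac{(d+1)\iota}{2}q-(q-1)$ (resp. $w>(q-1)(\ell s-1)$) must be used, since the reweighting factors are controlled by $m(\sigma)$ on the spin side or by the flow weight on the flow side, and one needs the geometric spacing of the $w_i$ to interact correctly with the support of these weights. Once the bounded-relative-variance claim is in hand, assembling the sample complexity, the warm-start/mixing cost from Theorem~\ref{thm:main sampling}, and the accumulation of errors across phases is routine and yields the stated bounds; I would also remark that standard median-of-$O(\log(1/\text{fail}))$ tricks upgrade the success probability to $3/4$ without affecting the stated orders.
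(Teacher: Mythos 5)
Your proposal has a genuine gap at its core: the telescoping product you set up runs over interaction parameters $1=w_0<w_1<\dots<w_k=w$, so every intermediate ratio must be estimated by sampling from the Potts/flow measure at parameter $w_i$. But Theorem~\ref{thm:main sampling} only supplies a sampler in the low-temperature regime $w>\tfrac{(d+1)\iota}{2}q-(q-1)$ (resp.\ $w>(q-1)(\ell s-1)$); for the intermediate values $1\leq w_i$ below that threshold no sampler is available, and indeed Section~\ref{se:slow} shows the flow chain can mix slowly away from the covered regime (and the problem is \#BIS-hard in part of it), so the standard anneal-in-temperature path from the trivial point $Z_{\mathrm{Potts}}(G;q,1)=q^n$ cannot be carried out with the tools of this paper. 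One could try to repair this by annealing on the flow side from $x=1$ (where $Z_{\mathrm{flow}}(G;q,1)=q^{m-n+c(G)}$ is known exactly) down to the target $x$, staying inside the rapid-mixing window, but that is not what you wrote, and even then a second problem remains: to keep each ratio bounded you need parameter increments of relative size $O(1/m)$, hence $\Theta(m)$ phases, not the $O(n)$ you claim. Your justification that $m=O(n)$ for graphs in $\mathcal{G}$ is false in general (e.g.\ two vertices joined by $m$ parallel edges admit an even generating set of $2$-cycles with all of $d,\iota,\ell,s$ bounded), so temperature-annealing would yield roughly $m^2\epsilon^{-2}$ samples and a running time with $m^2$ in place of the stated $n^2$ factor.

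The paper takes a different self-reducibility route that avoids both issues: the temperature is held fixed at the target $x$, and the telescoping product is over a sequence of \emph{graphs} $G=G_0,G_1,\dots,G_t$ obtained by contracting non-loop edges one at a time, with $t=|V|-c(G)\leq n$ steps and a terminal graph of loops whose flow partition function is explicit. Each ratio $Z_{\mathrm{flow}}(G_i/e;q,x)/Z_{\mathrm{flow}}(G_i;q,x)$ is, via deletion--contraction, an affine function of the probability that $e$ carries flow value $0$ under $\mu_{\mathrm{flow}}$ on $G_i$, lies in $[1,1/x]\subseteq[1,3]$, and is estimated with $O(n\epsilon^{-2})$ samples at accuracy $\delta=O(\epsilon/n)$. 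Crucially, Lemma~\ref{lem:contraction is good} guarantees that contraction does not worsen the parameters $d,\iota,\ell,s$, so the same sampler applies to every $G_i$ at the same $x$; this is exactly what produces the $n^2$ (rather than $m^2$) factor and keeps all sampling inside the proven rapid-mixing regime.
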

\begin{remark}
We note that the dependence of the Potts model parameter $w$ on the parameters $\ell(\mathcal{C})$ and $s(\mathcal{C})$ is similar as in~\cite{BR19}, except there the dependence on $s$ is order $\sqrt{s}$, which is better than our linear dependence.
This of course raises the question whether our analysis can be improved to get the same dependence.
\end{remark}

We now give a few examples of applications of our results.
\begin{example}
(i) Let $G_1$ and $G_2$ be two graphs that both contain a connected graph $H$ as induced subgraph. Let $G_1\cup_H G_2$ be the graph obtained from $G_1$ and $G_2$ by identifying the vertices of the graph $H$ in both graphs.
If both $G_1$ and $G_2$ have an even generating set consisting of cycles of length at most $\ell$ for some $\ell$, then the same holds for $G_1\cup_H G_2$.

Now use this procedure to build a subgraph $G=(V,E)$ of $\mathbb{Z}^d$, $d\geq 2$ from the union of finitely many copies of elementary cubes ((${\{0,1\}}^d$).
Since an an elementary cube has a generating set consisting of $4$-cycles, as is seen by induction on $d$, and so the resulting graphs has an even generating set $\mathcal{C}$ consisting only of $4$-cycles. The relevant parameters of $\mathcal{C}$ are $d(\mathcal{C})=8(d-1)-4$, $\iota(\mathcal{C})=1$, $\ell(\mathcal{C})=4$, $s(\mathcal{C})=2(d-1)$, and $|\mathcal{C}|\leq (d-1)|E|/2$.

(ii) In a similar manner as in (i) one can also construct graphs with concrete parameters from lattices such as the triangular lattices (here $d=3$, $\iota=1$, $\ell=3$ and $s=2$) or its dual lattice, the honeycomb lattice (here $d=6$, $\iota=1$, $\ell=6$, and $s=2$).

(iii) For any (multi)graph $G=(V,E)$ with even generating set $\mathcal{C}$, the graph $G/e$ obtained by contracting some edge $e \in E$ has an even generating set $\mathcal{C}/e \coloneqq \{C/e: C \in \mathcal{C} \}$. One can check that $\mathcal{C}$ and $\mathcal{C}/e$ have the same parameters $d, \iota, \ell, s$ (see Lemma~\ref{lem:contraction is good}). This allows us to apply our algorithms to many graph classes of unbounded degree e.g.\ any graph that can be obtained from $\mathbb{Z}^d$ by a series of contractions.  
\end{example}

As we shall see in the next subsection, the Markov chains on flows that we introduce are a natural means of studying the ferromagnetic Potts model at low temperatures. The examples above show that it is easy to generate many graphs (also of unbounded degree) for which these chains mix rapidly and therefore for which our results above apply. In this paper, we begin the analysis of these Markov chains on flows, but we believe there is a lot of scope for further study of these chains to obtain better sampling and counting algorithms for the ferromagnetic Potts model at low temperature.

\subsection{Approach and discussion}
The key step in our proof of Theorems~\ref{thm:main sampling} and ~\ref{thm:main counting} is to view the partition function of the Potts model as a generating function of flows taking values in an abelian group of order $q$.
Although well known to those acquainted with the Tutte polynomial and its many specializations, this perspective has not been exploited 
in the  sampling/counting literature (for $q\geq 3$) to the best of our knowledge.
For the special case of the Ising model, that is, $q=2$, this perspective is known as the even `subgraphs world' and has been key in determining an efficient sampling/counting algorithm for the Ising model (with external field) by Jerrum and Sinclair~\cite{JSising}, although the Markov chain used there is defined on the collection of all subsets of the edge set $E$ rather than on just the even sets.
We however define a Markov chain on a state space which, for $q=2$, is supported only on the even sets. 
For $q=2$ one could interpret our Markov chain as Glauber dynamics with respect to a fixed basis of the space of even sets (which forms a vector space over $\mathbb{F}_2$), that is, we move from one even subgraph to another by adding/subtracting elements from the basis. 
In the general case $(q\geq 3)$ the even subgraphs need to be replaced by flows, but, aside from some technical details, our approach remains the same. 
We analyse the Markov chain using the well known method of path coupling~\cite{BD97,Jbook} to obtain our first sampling result Theorem~\ref{thm:main sampling}(i), and
the proof of Theorem~\ref{thm:main counting}(i) then follows by standard arguments after a suitable self-reducibility trick.

Another well known way of representing the partition function of the Potts model is via the random cluster model. 
Only recently, it was shown that a natural Markov chain called random cluster dynamics is rapidly mixing for the Ising model~\cite{GJ18}, yielding another way of obtaining approximation algorithms for the partition function of the Ising model. 
In the analysis a coupling due to Grimmet and Jansson\cite{GM09} between the random cluster model and the even subgraphs world was used.
We extend this coupling to the level of flows and we analyse the Glauber dynamics on the joint space of flows and clusters to obtain a proof of part (ii) of Theorems~\ref{thm:main sampling} and ~\ref{thm:main counting}.

\paragraph{Organization}
In the next section we introduce the notion of flows and the flow partition function, showing the connection to the Potts model and the random cluster model. We also give some preliminaries on Markov chains. In Section~\ref{se:flowchain}, we introduce and analyse the flow chain and prove Theorem~\ref{thm:main sampling}(i). In Section~\ref{se:jointchain} we introduce and analyse the joint flow-random cluster Markov chain, which allows us to prove Theorem~\ref{thm:main sampling}(ii). In Section~\ref{se:count} we examine the subtleties involved in showing that our sampling algorithms imply corresponding counting algorithms: we deduce Theorem~\ref{thm:main counting} from Theorem~\ref{thm:main sampling} in this section. Finally in Section~\ref{se:slow}, we use the duality between flows and Potts configurations to deduce a slow mixing result for our flow chain (on $\mathbb{Z}^2$) from existing results about slow mixing for the Potts model.

\section{Preliminaries}

\subsection{The flow partition function}
Let $G=(V,E)$ be a graph. Throughout, if it is unambiguous, we will take $n\coloneqq|V|$ and $m\coloneqq|E|$. 
In order to define a flow on $G$, we first orient the edges of $G$. (We will assume from now on  that the  edges of graphs have been given a fixed orientation even if this is not explicitly stated). 
For an abelian group $\Gamma$, a $\Gamma$-flow (on $G$) is an assignment $f: E \rightarrow \Gamma$ of a value of $\Gamma$ to every edge of $G$ such that, for every vertex, the sum (in $\Gamma$) at the incoming edges is the same as the sum (in $\Gamma$) at the outgoing edges. 
For a positive integer $q$, the \emph{flow partition function} is defined as
\[
  Z_\mathrm{flow}(G;q,x) = \sum_{f:E \to \mathbb{Z}_q \text{ flow}} x^{\#\text{non-zero edges in }f}.
\]
Note that $Z_\mathrm{flow}$ only depends on the underlying graph and not on the orientation of $G$. 
It is moreover well known that in the definition of the partition function we can replace the group $\mathbb{Z}_q$ by any abelian group $\Gamma$ of order $q$, without changing the partition function. 
We will however make no use of this and solely work with the group $\mathbb{Z}_q$.

Recall from the introduction that $\mathcal{F}(G)$ denotes the set of $\Z$-flows. We write $\mathcal{F}_q(G)$ for the set of $\Z_q$-flows (namely the set of all flows $f:E\to \mathbb{Z}_q$), and for $F\subseteq E$ we denote by $\mathcal{F}_q(V,F)$ the set of all flows $f:F\to \mathbb{Z}_q$. 
The \emph{support} of a flow $f$ is the collection of edges that receive a nonzero flow value and is denoted by $\supp(f)$.
We denote by $\mathrm{nwz}(F;q)$ the number of flows $f:F\to \mathbb{Z}_q$ such that $\supp(f)=F$ (where nwz stands for nowhere zero).
Finally, for positive $x$, there is a natural probability measure $\mu_{\mathrm{flow}}$ on $\mathcal{F}_q(V,E)$, defined  by
\begin{equation}
  \mu_{\mathrm{flow}}(f)\coloneqq\frac{x^{|\supp(f)|}}{Z_{\mathrm{flow}}(G;q,x)}
\end{equation}
for each $f \in \mathcal{F}_q(V,E)$.

The following fact is well known and goes back to Tutte~\cite{Tutte54}.
\begin{lemma}\label{lem:tutte}
Let $q\in \mathbb{N}_{\geq 1}$ and let $x\in \mathbb{C}\setminus\{1\}$. Let $G=(V,E)$ be a graph. Then
\begin{equation}
     q^{|V|} Z_\mathrm{flow}(G;q,x) = (1-x)^{|E|} Z_\mathrm{Potts}\left(G;q,\frac{1+(q-1)x}{1-x}\right).
\end{equation}
\end{lemma}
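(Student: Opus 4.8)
The plan is to establish the identity by a double-counting argument that expands both partition functions over subsets of the edge set. Recall the standard random-cluster (Fortuin–Kasteleyn) expansion of the Potts partition function: writing $y = \frac{1+(q-1)x}{1-x}$, so that $y - 1 = \frac{qx}{1-x}$, one has
\begin{equation}\label{eq:fk}
Z_\mathrm{Potts}(G;q,y) = \sum_{A \subseteq E} (y-1)^{|A|} q^{c(A)},
\end{equation}
where $c(A)$ is the number of connected components of the spanning subgraph $(V,A)$. This is obtained by writing $y^{m(\sigma)} = \prod_{e=\{u,v\}}(1 + (y-1)\mathbf{1}[\sigma(u)=\sigma(v)])$, expanding the product over subsets $A$ of edges, and summing over $\sigma$: a colouring contributes to the term indexed by $A$ iff it is constant on every component of $(V,A)$, giving $q^{c(A)}$ such colourings.

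Next I would rewrite the left-hand side. For a fixed $A \subseteq E$, group the $\mathbb{Z}_q$-flows $f$ on $G$ by their support: $Z_\mathrm{flow}(G;q,x) = \sum_{A \subseteq E} x^{|A|}\,\mathrm{nwz}(A;q)$, where $\mathrm{nwz}(A;q)$ is the number of $\mathbb{Z}_q$-flows supported exactly on $A$. The key combinatorial fact I need is a clean formula for $\mathrm{nwz}(A;q)$ — equivalently for $|\mathcal{F}_q(V,A)|$, the total number of $\mathbb{Z}_q$-flows on the subgraph $(V,A)$ (allowing zeros), since the two are related by inclusion–exclusion over subsets of $A$. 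The relevant identity is $|\mathcal{F}_q(V,A)| = q^{|A| - |V| + c(A)}$: the space of $\mathbb{Z}_q$-flows on $(V,A)$ is the kernel of the boundary map $\mathbb{Z}_q^A \to \mathbb{Z}_q^V$, whose image is the set of "zero-sum on each component" vectors, of size $q^{|V|-c(A)}$, so the kernel has size $q^{|A|-|V|+c(A)}$. (For $q$ not prime one should note $\mathbb{Z}_q^A$ is a free $\mathbb{Z}_q$-module and the rank-nullity count still goes through because the boundary map is defined over $\mathbb{Z}$ and its cokernel is free; alternatively invoke that $Z_\mathrm{flow}$ is independent of the choice of abelian group of order $q$ and reduce to the cyclic prime-power case, or simply cite this as the standard fact behind the flow polynomial.)

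With these two ingredients the computation is mechanical. Substituting $|\mathcal{F}_q(V,A)| = q^{|A|-|V|+c(A)}$ and expanding via Möbius inversion, $\mathrm{nwz}(A;q) = \sum_{B \subseteq A} (-1)^{|A|-|B|} q^{|B|-|V|+c(B)}$, one gets
\begin{equation}\label{eq:flow-expand}
q^{|V|} Z_\mathrm{flow}(G;q,x) = \sum_{A \subseteq E} x^{|A|} \sum_{B \subseteq A} (-1)^{|A \setminus B|} q^{|B|+c(B)}.
\end{equation}
Swapping the order of summation, the inner sum over $A \supseteq B$ contributes $\sum_{A \supseteq B} (-1)^{|A\setminus B|} x^{|A|} = x^{|B|} (1-x)^{|E|-|B|} \cdot (1-x)^{\,?}$ — more precisely $\sum_{C \subseteq E \setminus B} (-1)^{|C|} x^{|B|+|C|} = x^{|B|}(1-x)^{|E \setminus B|}$. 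Hence the right side of \eqref{eq:flow-expand} equals $(1-x)^{|E|}\sum_{B \subseteq E} q^{c(B)} \bigl(\tfrac{qx}{1-x}\bigr)^{|B|} = (1-x)^{|E|}\sum_{B \subseteq E}(y-1)^{|B|} q^{c(B)}$, which by \eqref{eq:fk} is exactly $(1-x)^{|E|} Z_\mathrm{Potts}(G;q,y)$. The hypothesis $x \neq 1$ is used only to keep $y$ well-defined; the polynomial identity in fact holds for all $x$ and then extends by continuity, so one could alternatively just verify it as an identity of polynomials in $x$ and $q$.

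The main obstacle is getting the flow-count formula $|\mathcal{F}_q(V,A)| = q^{|A|-|V|+c(A)}$ right when $q$ is composite (so $\mathbb{Z}_q$ is not a field): one must argue that the $\mathbb{Z}$-boundary map has free cokernel — which it does, since $\mathbb{Z}^V / \mathrm{im}(\partial) \cong \mathbb{Z}^{c(G[A])}$ — so that tensoring with $\mathbb{Z}_q$ preserves the exact sequence and the dimension count survives. Everything else is bookkeeping: the FK expansion is entirely standard, and the two nested summation swaps are routine binomial manipulations.
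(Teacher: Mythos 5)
Your proposal is correct and follows essentially the same route as the paper: both sides are expanded into the random-cluster form, i.e.\ you show $q^{|V|}Z_{\mathrm{flow}}(G;q,x)=(1-x)^{|E|}\sum_{B\subseteq E}q^{c(B)}\bigl(\tfrac{qx}{1-x}\bigr)^{|B|}$ using the flow-count formula $|\mathcal{F}_q(V,B)|=q^{|B|-|V|+c(B)}$ and a binomial sum swap, and then invoke the Fortuin--Kasteleyn identity $Z_{\mathrm{Potts}}(G;q,y)=Z_{\mathrm{RC}}(G;q,y-1)$ with $y-1=\tfrac{qx}{1-x}$, which is exactly how the paper deduces the lemma from its equations \eqref{eq:pf Potts=pf RC} and \eqref{eq:lem: pf flow=pf RC}. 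The only differences are cosmetic: the paper cites the FK identity and proves the flow count by a spanning-tree argument (while you derive FK and count flows via the boundary map, handling composite $q$ correctly), and it organizes the sum swap through the auxiliary partition function $Z(G;q,x)$ rather than explicit M\"obius inversion.
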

This lemma follows by combining \eqref{eq:pf Potts=pf RC} and \eqref{eq:lem: pf flow=pf RC} below: it illustrates a useful coupling between random flows and the random cluster model. 
We remark that the function $x\mapsto \frac{1+(q-1)x}{1-x}$ (seen as a function from $\C\cup \{\infty\}\to \C\cup \{\infty\}$) has the property that it sends $0$ to $1$, $1$ to $\infty$ and the interval $[0,1]$ to $[1,\infty]$ in an orientation preserving way.
So approximating the partition function of the $q$-state ferromagnetic Potts model at low temperatures ($1\ll w$) is equivalent to approximating the flow partition function for values $x\in (0,1)$ close to $1$.

\subsection{The random cluster model and a useful coupling}
We view the \emph{partition function of the random cluster model} for a fixed positive integer $q$ as a polynomial in a variable $y$. It is defined for a graph $G=(V,E)$ as follows:
\begin{equation}
    Z_{\mathrm{RC}}(G;q,y)\coloneqq\sum_{F\subseteq E} q^{c(F)}y^{|F|},
\end{equation}
where $c(F)$ denotes the number of components of the graph $(V,F)$.
For $y\geq 0$, we denote the associated probability distribution on the collection of subsets of the edges $\{F\mid F\subseteq E\}$ by $\mu_{\mathrm{RC}}$, i.e., for $F\subseteq E$ we have
\[
\mu_{\mathrm{RC}}(F)=\frac{q^{c(F)}y^{|F|}}{Z_{\mathrm{RC}}(G;q,y)}.
\]
It is well known, see e.g.~\cite{ES88}, that 
\begin{equation}
Z_{\mathrm{Potts}}(G;q,w)=Z_{\mathrm{RC}}(G;q,w-1).
\label{eq:pf Potts=pf RC}
\end{equation}

To describe the connection between $Z_\mathrm{RC}$ and $Z_\mathrm{flow}$ and a coupling between the associated probability distributions, it will be useful to consider the following partition function for a graph $G=(V,E)$:
\begin{equation}
 Z(G;q,x)\coloneqq(1-x)^{|E|}q^{|V|}\sum_{A\subseteq E}\mathrm{nwz}(A;q,x)\sum_{\substack{F\subseteq E \\ A\subseteq F}}\left(\frac{x}{1-x}\right)^{|F|}.  \label{eq:pf general}   
\end{equation}
The associated probability distribution $\mu$ is on pairs $(f,F)$ such that $f$ is a $\mathbb{Z}_q$-flow  on $G$ with $\supp(f)\subseteq F$.
By~\eqref{eq:pf Potts=pf RC}, the next lemma directly implies Lemma~\ref{lem:tutte}; the lemma and the coupling it implies extend~\cite{GM09}.
\begin{lemma}\label{lem: pf flow=pf RC}
Let $q\in \mathbb{N}_{\geq 1}$ and let $x\in \mathbb{C}\setminus\{1\}$. Let $G=(V,E)$ be a graph. 
\begin{equation}
q^{|V|}Z_{\mathrm{flow}}(G;q,x)=Z(G;q,x)=(1-x)^{|E|}Z_{\mathrm{RC}}(G;q,\tfrac{qx}{1-x}). \label{eq:lem: pf flow=pf RC}
\end{equation}
\end{lemma}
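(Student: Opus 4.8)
The plan is to prove the two equalities in \eqref{eq:lem: pf flow=pf RC} separately by reorganizing the double sum defining $Z(G;q,x)$ in two different ways. Throughout, recall that $\mathrm{nwz}(A;q)$ counts the $\mathbb{Z}_q$-flows with support exactly $A$, and note that summing $\mathrm{nwz}(A;q)$ over all $A\subseteq F$ counts \emph{all} $\mathbb{Z}_q$-flows supported inside $F$; by the standard fact that the number of $\mathbb{Z}_q$-flows on a graph with edge set $F$ (and the same vertex set $V$) equals $q^{|F|-|V|+c(F)}$, we get $\sum_{A\subseteq F}\mathrm{nwz}(A;q)=q^{|F|-|V|+c(F)}$. (I will have to double-check whether the paper writes $\mathrm{nwz}(A;q)$ or $\mathrm{nwz}(A;q,x)$ in \eqref{eq:pf general}; since $\mathrm{nwz}$ is defined to depend only on $q$, the $x$ there must be a typo and I will treat it as $\mathrm{nwz}(A;q)$.)

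\emph{Second equality.} Starting from \eqref{eq:pf general}, swap the order of summation so that the outer sum is over $F\subseteq E$ and the inner sum over $A\subseteq F$:
\[
Z(G;q,x)=(1-x)^{|E|}q^{|V|}\sum_{F\subseteq E}\left(\frac{x}{1-x}\right)^{|F|}\sum_{A\subseteq F}\mathrm{nwz}(A;q).
\]
Substituting $\sum_{A\subseteq F}\mathrm{nwz}(A;q)=q^{|F|-|V|+c(F)}$, the $q^{|V|}$ cancels, the factor $q^{|F|}$ combines with $(x/(1-x))^{|F|}$ to give $(qx/(1-x))^{|F|}$, and we are left with $(1-x)^{|E|}\sum_{F\subseteq E}q^{c(F)}(qx/(1-x))^{|F|}=(1-x)^{|E|}Z_{\mathrm{RC}}(G;q,\tfrac{qx}{1-x})$, as desired.

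\emph{First equality.} Here I keep the outer sum over $A\subseteq E$ and evaluate the inner sum $\sum_{F:\,A\subseteq F\subseteq E}(x/(1-x))^{|F|}$ directly: writing $F=A\cup F'$ with $F'\subseteq E\setminus A$, this is $(x/(1-x))^{|A|}\sum_{F'\subseteq E\setminus A}(x/(1-x))^{|F'|}=(x/(1-x))^{|A|}(1+x/(1-x))^{|E\setminus A|}=(x/(1-x))^{|A|}(1-x)^{-(|E|-|A|)}$. Multiplying by the prefactor $(1-x)^{|E|}q^{|V|}$ collapses everything to $q^{|V|}\sum_{A\subseteq E}\mathrm{nwz}(A;q)\,x^{|A|}$. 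Finally, grouping $\mathbb{Z}_q$-flows by their support, $\sum_{A\subseteq E}\mathrm{nwz}(A;q)x^{|A|}=\sum_{f:E\to\mathbb{Z}_q\text{ flow}}x^{|\supp(f)|}=Z_{\mathrm{flow}}(G;q,x)$, which gives $q^{|V|}Z_{\mathrm{flow}}(G;q,x)=Z(G;q,x)$.

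The computation is entirely routine once the right summation orders are chosen; the only genuine input is the counting identity for the number of $\mathbb{Z}_q$-flows supported in a given edge set, equivalently $|\mathcal{F}_q(V,F)|=q^{\dim}$ with $\dim=|F|-|V|+c(F)$ the first Betti number of $(V,F)$. This is classical (it is exactly the rank–nullity statement for the boundary map over $\mathbb{Z}_q$, or can be derived by spanning-tree/deletion–contraction), so I would state it as a one-line lemma or cite \cite{GRbook}. The main thing to be careful about is bookkeeping with the $(1-x)$ factors and the condition $x\neq 1$ (needed so that $x/(1-x)$ and the geometric-type manipulations make sense); the identity then extends to all $x\in\mathbb{C}\setminus\{1\}$ since both sides are polynomials in $x$ (after clearing denominators) agreeing on an infinite set, or simply because every step above is a valid identity of rational functions.
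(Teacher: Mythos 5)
Your proposal is correct and follows essentially the same route as the paper: the first equality by keeping the outer sum over $A$ and summing out $F\supseteq A$ (the paper absorbs $(1-x)^{|E|}$ into a binomial-type sum equal to $1$, you evaluate the geometric sum and cancel — the same computation), and the second by swapping the order of summation and invoking $\sum_{A\subseteq F}\mathrm{nwz}(A;q)=|\mathcal{F}_q(V,F)|=q^{|F|-|V|+c(F)}$, which the paper also proves via a spanning-tree argument. Your reading of $\mathrm{nwz}(A;q,x)$ in \eqref{eq:pf general} as a typo for $\mathrm{nwz}(A;q)$ matches the paper's usage in its own proof.
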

\begin{proof}
The first equality follows by the following sequence of identities:
\begin{align*}
&(1-x)^{|E|}q^{|V|}\sum_{A\subseteq E}\mathrm{nwz}(A;q)\sum_{\substack{F\subseteq E \\ A\subseteq F}}\left(\frac{x}{1-x}\right)^{|F|}=q^{|V|}\sum_{A\subseteq E}\mathrm{nwz}(A;q)\sum_{\substack{F\subseteq E \\ A\subseteq F}}x^{|F|}(1-x)^{|E\setminus F|}
\\
&=q^{|V|}\sum_{A\subseteq E}\mathrm{nwz}(A;q)x^{|A|}\sum_{\substack{F\subseteq E \\ A\subseteq F}}x^{|F\setminus A|}(1-x)^{|E\setminus F|}=q^{|V|}\sum_{A\subseteq E}\mathrm{nwz}(A;q)x^{|A|}=q^{|V|}Z_{\mathrm{flow}}(G;q,x).
\end{align*}
For the second equality we use the well known fact that $|\mathcal{F}_q(V,F)|$ (the number of all flows on the graph $(V,F)$ taking values in an abelian group of order $q$), satisfies
\begin{equation}\label{eq:flow count}
|\mathcal{F}_q(V,F)| = Z_\mathrm{flow}((V,F);q,1)=q^{|F|-|V|+c(F)}.
\end{equation}
To see it, note first that we may assume $(V,F)$ is connected since both sides of the identity are multiplicative over components. 
Fix a spanning tree $T\subseteq F$ and assign values from $\mathbb{Z}_q$ to $F\setminus T$. It is not hard to see that these values can be uniquely completed to a flow by iteratively `removing' a leaf from $T$. 

We then have the following chain of equalities:
\begin{align*}
&(1-x)^{|E|}q^{|V|}\sum_{A\subseteq E}\mathrm{nwz}(A;q)\sum_{\substack{F\subseteq E \\ A\subseteq F}}\left(\frac{x}{1-x}\right)^{|F|}=(1-x)^{|E|}\sum_{F\subseteq E}\left(\frac{x}{1-x}\right)^{|F|}q^{|V|}\sum_{A\subseteq F}\mathrm{nwz}(A;q)
\\
&=(1-x)^{|E|}\sum_{F\subseteq E}\left(\frac{x}{1-x}\right)^{|F|}q^{|V|}
|\mathcal{F}_q(V,F)|
=(1-x)^{|E|}\sum_{F\subseteq E}\left(\frac{x}{1-x}\right)^{|F|}q^{|F|+c(F)}
\\
&=(1-x)^{|E|}Z_{\mathrm{RC}}(G;q,\tfrac{qx}{1-x}).
\end{align*}
\end{proof}

The previous lemma in fact gives a coupling between the probability measures $\mu_{\mathrm{flow}}$ and $\mu_{\mathrm{RC}}$ (with the same parameters as in the lemma).
More concretely, given a random flow $f$ drawn from $\mu_{\mathrm{flow}}$ let $A$ be the support of $f$. Next select each edge $e\in E\setminus A$ independently with probability $x$.
The resulting set $F$ is then a sample drawn from $\mu_{\mathrm{RC}}$. To see this, observe that the probability of selecting the set $F$ is given by
\begin{align*}
\sum_{A\subseteq F}\frac{\mathrm{nwz}(A;q)x^{|A|}}{Z_{\mathrm{flow}}(G;q,x)}x^{|F\setminus A|}(1-x)^{|E\setminus F|}
=\frac{|\mathcal{F}_q(V,F)|}{Z_{\mathrm{flow}}(G;q,x)}x^{|F|}(1-x)^{|E\setminus F|}
=\mu_{\mathrm{RC}}(F),
\end{align*}
where the last equality follows by the lemma above and~\eqref{eq:flow count} and the definition of $\mu_{\mathrm{RC}}$.
Conversely (by a similar calculation), given a sample $F$ drawn from $\mu_{\mathrm{RC}}$ one can obtain a random flow drawn from $\mu_{\mathrm{flow}}$ by choosing a uniform flow on $(V,F)$. 

For any $\delta>0$, this procedure transforms a $\delta$-approximate sampler $\hat{\mu_{\mathrm{flow}}}$ for $\mu_{\mathrm{flow}}$ with parameters $q$ and $x\in (0,1)$ into a $\delta$-approximate sampler $\hat{\mu_{\mathrm{RC}}}$ for $\mu_{\mathrm{RC}}$ with parameters $q, \tfrac{qx}{1-x}$ in time bounded by $O(|E|)$.
Indeed, denoting for a flow $f$, $\delta_{\mathrm{flow}}(f)\coloneqq\hat{\mu_{\mathrm{flow}}}(f)-\mu_{\mathrm{flow}}(f)$, we have by the triangle inequality
  \begin{align*}
      \sum_{F\subseteq E} |\hat{\mu_\mathrm{RC}}(F) - \mu_\mathrm{RC}(F)| 
      &= \sum_{F\subseteq E} \left| \sum_{f\in \mathcal{F}_q(V,F)} \delta_\mathrm{flow}(f) x^{|F\setminus \supp(f)|} (1-x)^{|E\setminus F|}\right| \\
      & \leq \sum_{F\subseteq E}  \sum_{f\in  \mathcal{F}_q(V,F)} |\delta_\mathrm{flow}(f)| x^{|F\setminus \supp(f)|} (1-x)^{|E\setminus F|}\\
      &= \sum_{f\in\mathcal{F}_q(V,F) } |\delta_\mathrm{flow}(f)| \sum_{F\supseteq \supp(f)}x^{|F\setminus \supp(f)|} (1-x)^{|E\setminus F|} \\
      &=\sum_{f \in \mathcal{F}_q(V,F) } |\delta_\mathrm{flow}(f)| \leq 2\delta.
  \end{align*}
  
The Edwards-Sokal coupling~\cite{ES88} allows us to generate a sample from the Potts model (with parameters $q, w+1$), given a sample $F$ from the random cluster model (with parameters $q, w$): for each component of $(V,F)$ uniformly and independently choose a colour $i\in [q]$ and colour each of the vertices in this component with this colour. 
Again if we have a $\delta$-approximate sampler $\hat{\mu_{\mathrm{RC}}}$ for $\mu_{\mathrm{RC}}$ this will be transformed into a $\delta$-approximate sampler $\hat{\mu_{\mathrm{Potts}}}$ for $\mu_{\mathrm{Potts}}$ in time bounded by $O(|E|)$.
We summarize the discussion above in a proposition.
\begin{prop} \label{prop:flow sampler to RC/Potts}
Let $G=(V,E)$ be a graph and let  $q\in \mathbb{N}_{\geq 2}$ and $x>0$.
Let $\delta>0$.
Given an approximate $\delta$-approximate sampler $\hat{\mu_{\mathrm{flow}}}$ for $\mu_{\mathrm{flow}}$ with parameters $q$ and $x\in (0,1)$, we can obtain $\delta$-approximate approximate samplers from 
\begin{itemize}
    \item $\mu_{\mathrm{RC}}$ with parameters $q$ and $\tfrac{qx}{1-x}$ in time $O(|E|)$,
    \item $\mu_{\mathrm{Potts}}$ with parameters $q$ and $\tfrac{1+(q-1)x}{1-x}$ in time $O(|E|)$.
\end{itemize}
\end{prop}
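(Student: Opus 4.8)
The statement collects three facts that have already been essentially established in the surrounding text, so the proof is mostly a matter of assembling the pieces and tracking the composition of time bounds and approximation errors. The plan is to prove the two bullet points in order, each time exhibiting an explicit post-processing map that converts a sample from one distribution into a sample from the next, verifying that the map pushes $\mu_{\mathrm{flow}}$ forward to the target distribution exactly, bounding the running time of the map, and then checking that total variation error is not increased (indeed the data-processing inequality for total variation does this automatically, but the explicit triangle-inequality computation already given in the text makes it self-contained).

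For the first bullet I would recall the coupling described immediately after Lemma~\ref{lem: pf flow=pf RC}: given a flow $f$ sampled from $\hat{\mu_{\mathrm{flow}}}$, set $A=\supp(f)$ and form $F$ by adding each edge of $E\setminus A$ independently with probability $x$. Sampling $|E\setminus A|$ independent biased coins costs $O(|E|)$ in the RAM model. The computation already displayed in the text shows that if $f$ were distributed exactly as $\mu_{\mathrm{flow}}$ then $F$ would be distributed exactly as $\mu_{\mathrm{RC}}$ with parameter $y=\tfrac{qx}{1-x}$, using Lemma~\ref{lem: pf flow=pf RC} and the flow-count identity~\eqref{eq:flow count}; and the four-line triangle-inequality estimate that follows shows $\|\hat{\mu_{\mathrm{RC}}}-\mu_{\mathrm{RC}}\|_{\mathrm{TV}}\le\|\hat{\mu_{\mathrm{flow}}}-\mu_{\mathrm{flow}}\|_{\mathrm{TV}}\le\delta$. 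So the first bullet follows verbatim from the discussion; I would simply cite it.

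For the second bullet I would compose the map of the first bullet with the Edwards--Sokal colouring step~\cite{ES88}: given the random-cluster sample $F$, compute the connected components of $(V,F)$ (a single graph traversal, $O(|V|+|E|)=O(|E|)$ assuming $G$ connected, or $O(|V|+|E|)$ in general), then assign to each component an independent uniform colour from $[q]$ and colour all its vertices accordingly, costing a further $O(|V|)$. Standard Edwards--Sokal says this sends $\mu_{\mathrm{RC}}(\cdot\,;q,y)$ exactly to $\mu_{\mathrm{Potts}}(\cdot\,;q,y+1)$, and with $y=\tfrac{qx}{1-x}$ we get $w=y+1=\tfrac{1+(q-1)x}{1-x}$, matching the claimed Potts parameter. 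Again, since the colouring map is deterministic-plus-independent-randomness and does not look at the sampler's internal randomness, the total variation distance only decreases, so the composed sampler is a $\delta$-approximate sampler for $\mu_{\mathrm{Potts}}$. The total running time of the post-processing is $O(|E|)+O(|E|)=O(|E|)$, giving the stated bound.

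There is essentially no genuine obstacle here; the only thing to be a little careful about is the bookkeeping of parameters across the three models (flow parameter $x$, random-cluster parameter $\tfrac{qx}{1-x}$, Potts parameter $\tfrac{1+(q-1)x}{1-x}$, and the shift $w=y+1$ from~\eqref{eq:pf Potts=pf RC}) and the observation that $x\in(0,1)$ guarantees $y=\tfrac{qx}{1-x}>0$ and $w>1$, so the target distributions are well defined and lie in the ferromagnetic regime. The error analysis is either the explicit triangle-inequality bound reproduced above or, more conceptually, the statement that applying a fixed (randomised) channel to two input distributions does not increase their total variation distance; I would state the latter as the underlying principle and refer to the displayed computation for the random-cluster step as an instance of it.
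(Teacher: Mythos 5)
Your proposal is correct and follows essentially the same route as the paper: the flow-to-random-cluster coupling with the displayed triangle-inequality (data-processing) bound, followed by the Edwards--Sokal colouring step, each costing $O(|E|)$ post-processing, with the parameter shift $w=\tfrac{qx}{1-x}+1=\tfrac{1+(q-1)x}{1-x}$ tracked as you describe. Nothing further is needed.
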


\subsection{Generating sets and bases of flows}
In this subsection we give some useful properties of the set of flows and their even generating sets that will allow us to define Markov chains for sampling from $\mu_{\mathrm{flow}}$ in the next section.
In particular we show that an even generating set for the cycle space also generates the collection of $\mathbb{Z}_q$-flows in an appropriate sense to be made precise below.

Let $G=(V,E)$ be a connected graph and recall (from the Introduction) that $\mathcal{F}(G)$ is the set of $\mathbb{Z}$-flows on $G$ and let $\mathcal{C}$ be an even generating set of $\mathcal{F}(G)$. We already mentioned that $\mathcal{F}(G)$ forms a $\mathbb{Z}$-module; in fact it is a free-module of dimension $|E|-|V|+1$, cf.~\cite[Section 14]{GRbook}.
Similarly, the collection of $\Z_q$-flows on $G$ is closed under adding two flows and multiplying a flow by an element of $\Z_q$, making the space of $\Z_q$-flows into a $\Z_q$-module; it is also a free module of dimension $|E|-|V|+1$ by the same argument as for $\mathbb{Z}$, cf.~\cite[Section 14]{GRbook}. (Note that this fact also implies~\eqref{eq:flow count}.)

\begin{lemma}\label{lem:generating set for all}
Let $\mathcal{C}$ be an even generating set for $\mathcal{F}(G)$. Then $\mathcal{C}$ is a generating set for  $\mathcal{F}_q(G)$ for any positive integer $q$.
\end{lemma}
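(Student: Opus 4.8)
The plan is to show that the $\mathbb{Z}$-module generating set $\mathcal{C}$ for $\mathcal{F}(G)$, when reduced modulo $q$, generates $\mathcal{F}_q(G)$ as a $\mathbb{Z}_q$-module. The natural route is via the reduction map. Let $\pi : \mathbb{Z}^E \to \mathbb{Z}_q^E$ be the coordinatewise reduction $\pmod q$. First I would observe that $\pi$ restricts to a group homomorphism $\mathcal{F}(G) \to \mathcal{F}_q(G)$, since the flow-conservation equations are linear with integer coefficients and hence are preserved under reduction. So $\pi(\mathcal{C}) = \{\pi(\chi_C) : C \in \mathcal{C}\}$ consists of $\mathbb{Z}_q$-flows, and it suffices to prove that $\pi(\mathcal{F}(G)) = \mathcal{F}_q(G)$: if reduction is surjective onto $\mathcal{F}_q(G)$, then since $\mathcal{C}$ generates $\mathcal{F}(G)$ over $\mathbb{Z}$ and $\pi$ is a $\mathbb{Z}$-module homomorphism, $\pi(\mathcal{C})$ generates $\pi(\mathcal{F}(G)) = \mathcal{F}_q(G)$ over $\mathbb{Z}$, hence also over $\mathbb{Z}_q$ (a $\mathbb{Z}$-linear combination of reductions is the reduction of a $\mathbb{Z}$-linear combination, and $\mathbb{Z}$ acts on $\mathbb{Z}_q^E$ through $\mathbb{Z}_q$). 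Note $\pi(\chi_C)$ is precisely the $\mathbb{Z}_q$-flow obtained by reading the $\pm 1$-indicator of $C$ modulo $q$.

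The crux is therefore surjectivity of $\pi : \mathcal{F}(G) \to \mathcal{F}_q(G)$. I would prove this directly using the spanning-tree description already invoked in the proof of Lemma~\ref{lem: pf flow=pf RC} (the proof of equation~\eqref{eq:flow count}). We may assume $G$ is connected. Fix a spanning tree $T \subseteq E$. Given any $\mathbb{Z}_q$-flow $g \in \mathcal{F}_q(G)$, for each non-tree edge $e \in E \setminus T$ pick an integer lift $\tilde g(e) \in \mathbb{Z}$ with $\tilde g(e) \equiv g(e) \pmod q$. Now extend $\tilde g$ to all of $E$ by iteratively processing leaves of $T$: at a leaf $v$ incident to the unique tree edge $e_v$, flow conservation at $v$ forces the value $\tilde g(e_v)$ uniquely as an integer combination ($\pm 1$ coefficients) of the already-assigned values at the other edges at $v$; assign it and delete $v$ and $e_v$, repeating until $T$ is exhausted. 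This produces $\tilde g \in \mathbb{Z}^E$ satisfying the conservation law at every vertex except possibly the last one, which holds automatically since the total of all conservation equations is trivial; hence $\tilde g \in \mathcal{F}(G)$. By construction $\tilde g(e) \equiv g(e) \pmod q$ on $E \setminus T$, and because the extension step on $T$ uses the same integer formula that $g$ satisfies mod $q$, an induction on the leaf-removal order gives $\tilde g(e) \equiv g(e) \pmod q$ on all of $T$ as well. Thus $\pi(\tilde g) = g$, proving surjectivity.

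Combining the two parts: every $g \in \mathcal{F}_q(G)$ equals $\pi(\tilde g)$ for some $\tilde g \in \mathcal{F}(G)$, and writing $\tilde g = \sum_{C \in \mathcal{C}} a_C \chi_C$ with $a_C \in \mathbb{Z}$ (possible since $\mathcal{C}$ generates $\mathcal{F}(G)$) yields $g = \sum_{C \in \mathcal{C}} \bar a_C\, \pi(\chi_C)$ where $\bar a_C$ is $a_C \bmod q$. Hence $\pi(\mathcal{C})$ generates $\mathcal{F}_q(G)$ over $\mathbb{Z}_q$, which is the claim. For a disconnected $G$ one argues componentwise, since flows, the module structure, and generating sets all decompose over connected components.

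\textbf{Main obstacle.} The only real content is the surjectivity of the reduction map, and within that the bookkeeping that the integer lift produced by the leaf-peeling procedure actually reduces back to the given $\mathbb{Z}_q$-flow on the tree edges — i.e.\ that "lift on non-tree edges, then complete over $\mathbb{Z}$" is compatible with "complete over $\mathbb{Z}_q$". This is a routine induction but is the step one must be careful to state correctly; everything else is formal nonsense about module homomorphisms.
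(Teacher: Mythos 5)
Your proposal is correct and follows essentially the same route as the paper: lift the given $\mathbb{Z}_q$-flow to a $\mathbb{Z}$-flow by choosing integer representatives on the non-tree edges of a spanning tree and completing by leaf-peeling, checking the lift reduces back to the original flow, then expressing the lift in terms of $\mathcal{C}$ over $\mathbb{Z}$ and reducing the coefficients modulo $q$. Your added remarks (the last vertex's conservation holding automatically, and the componentwise reduction for disconnected $G$) are just slightly more explicit bookkeeping of the same argument.
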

\begin{proof}
Let $f\in\mathcal{F}_q(G)$ be a flow, we will construct a $\mathbb{Z}$-flow $f'$ which reduces modulo $q$ to $f$. Just as in the proof of Lemma~\ref{lem: pf flow=pf RC}, fix a spanning tree $T\subset E$, and now assign to every edge $e\in E\setminus T$ an integer from the residue class $f(e)$. These assignments can be completed iteratively into the flow $f'$ by choosing the edge towards a leaf, assigning a value to satisfy the flow condition in the leaf, and removing the edge from $T$. These new values are also in the residue class prescribed by $f$, because $f$ itself satisfies the flow condition in every leaf encountered. 
Writing $f'$ as a linear combination of $\chi_C$ for $C\in\mathcal{C}$ and reducing modulo $q$, we obtain $f$ as a $\mathbb{Z}_q$-linear combination of $\chi_C$.
\end{proof}


Finally, we will require the following lemma
for our reduction of sampling to counting in Section~\ref{se:count}.
For a graph $G=(V,E)$, a subgraph $H$ of $G$ and an edge $e\in E$, $H/e$ denotes the graph obtained from $H$ by contracting the edge $e$. (If $e$ is not an edge of $H$, then $H/e$ is just $H$.) 

\begin{lemma}\label{lem:contraction is good}
Let $G=(V,E)$ be a graph and let $q\in \mathbb{N}$. Let $\mathcal{C}=\{C_1,\ldots,C_r\}$ be an even generating set for the space of $\mathbb{Z}_q$-flows. 
Let $e\in E$ be a non-loop edge. 
Then $\mathcal{C}'\coloneqq\{C_1/e,\ldots,C_r/e\}$ is an even generating set for the space of $\mathbb{Z}_q$-flows of the graph $G/e$ satisfying $d(\mathcal{C}')\leq d((\mathcal{C})$, $\iota(\mathcal{C}')\leq \iota(\mathcal{C})$, $\ell(\mathcal{C}')\leq \ell(\mathcal{C})$ and $s(\mathcal{C}')\leq s(\mathcal{C})$. 
  \end{lemma}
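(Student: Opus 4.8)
The plan is to verify the four properties by understanding precisely what contracting a non-loop edge $e$ does to an even subgraph and to the incidence structure of $\mathcal{C}$. First I would recall the basic facts about contraction: if $C$ is an even subgraph of $G$ and $e \notin C$, then $C/e = C$ (as an edge set) and $C$ remains an even subgraph of $G/e$; if $e \in C$, then $C/e$ is the even subgraph of $G/e$ with edge set $C \setminus \{e\}$, and it is still even because contracting a single edge of an Eulerian (even) graph preserves the even-degree condition at the merged vertex (the two endpoints of $e$ each had odd contribution from $e$, which cancels). So in all cases $C/e$ is an even subgraph of $G/e$, and crucially, as a \emph{subset of $E \setminus \{e\}$}, we have $C/e = C \setminus \{e\}$. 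This is the key observation: contraction only deletes the single edge $e$ from each $C$ (or leaves $C$ untouched), so intersections can only shrink, supports can only shrink, and the number of sets containing a given edge can only shrink.

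Next I would check that $\mathcal{C}' = \{C_1/e, \dots, C_r/e\}$ generates the space of $\mathbb{Z}_q$-flows of $G/e$. Here I would use the fact (which can be cited from the surrounding discussion, or proved directly as in Lemma~\ref{lem:generating set for all}) that the flow space of $G/e$ is the image of the flow space of $G$ under the natural restriction map $\mathcal{F}_q(G) \to \mathcal{F}_q(G/e)$ given by forgetting the coordinate $e$; indeed, any $\mathbb{Z}_q$-flow on $G/e$ lifts to one on $G$ by assigning to $e$ the value forced by the flow condition (since $e$ is a non-loop edge, this is well-defined: the value on $e$ equals the net flow across the cut separating its two endpoints). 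Since each $\chi_C$ maps to $\chi_{C/e}$ under this restriction map (again because $C/e = C \setminus \{e\}$ and the Eulerian orientation of $C$ restricts compatibly — one should check the sign convention is preserved, which follows from choosing the Eulerian orientation of $C/e$ induced by that of $C$), and since the $\chi_C$ generate $\mathcal{F}_q(G)$, their images $\chi_{C/e}$ generate the image, which is all of $\mathcal{F}_q(G/e)$. Hence $\mathcal{C}'$ is an even generating set.

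Finally I would dispatch the four parameter inequalities, all of which are now immediate from $C_i/e = C_i \setminus \{e\}$ viewed inside $E \setminus \{e\}$: for $\ell$, $|C_i/e| \le |C_i| \le \ell(\mathcal{C})$; for $\iota$, $|C_i/e \cap C_j/e| = |(C_i \cap C_j) \setminus \{e\}| \le |C_i \cap C_j| \le \iota(\mathcal{C})$ whenever $C_i/e \ne C_j/e$ (the condition $C_i/e \ne C_j/e$ restricts to a subset of the pairs with $C_i \ne C_j$, so the max can only decrease); for $s$, each edge $f \in E \setminus \{e\}$ lies in $C_i/e$ iff it lies in $C_i$, so $s(\mathcal{C}') \le s(\mathcal{C})$; and for $d$, if $C_i/e \cap C_j/e \ne \emptyset$ then $C_i \cap C_j \ne \emptyset$, so $d(C_i/e) \le d(C_i) \le d(\mathcal{C})$ — here one must be slightly careful that distinct sets $C_i \ne C_j$ could collapse to equal sets $C_i/e = C_j/e$, but that only reduces the count $d(C_i/e)$ further, so the bound still holds. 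The one genuine subtlety — the step I expect to need the most care — is the sign/orientation bookkeeping in the claim $\chi_{C/e} = $ (restriction of $\chi_C$), i.e.\ making sure that contraction is compatible with the chosen fixed orientations of $G$ and $G/e$ and with the Eulerian orientations used to define the signed indicator vectors; once that is pinned down everything else is a one-line consequence of $C/e = C \setminus \{e\}$.
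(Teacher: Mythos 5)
Your proposal is correct and follows essentially the same route as the paper: lift any flow on $G/e$ uniquely to a flow on $G$ (the value on $e$ being forced), write it in terms of the $\chi_{C_i}$, and push through contraction to get $\chi_{C_i/e}$, with the parameter bounds following from $C_i/e = C_i\setminus\{e\}$. Your write-up is simply more explicit about the details the paper calls ``clear'' (the evenness of $C/e$, the sign/orientation bookkeeping, and the possible collapsing of distinct generators).
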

  \begin{proof}
This follows from the fact that any flow $f'$ on $G/e$ uniquely corresponds to a flow $f$ on $G$. The value on the edge $e$ for $f$ can be read off from the values of the edges incident to the vertex in $G/e$ corresponding to the two endpoints of the edge $e$. 
So, writing $f=\sum_{i=1}^r a_i\chi_{C_i}$ for certain $a_i\in \mathbb{Z}_q$, we get $f'=\sum_{i=1}^r a_i\chi_{C_i/e}$, proving the claim.
The claimed inequalities for the parameters are clear.
\end{proof}
\begin{remark}
Suppose $q$ is a prime in which case $\mathbb{Z}_q$ is a field and $\mathcal{F}_q(G)$ is a vector space over $\Z_q$. 
Then given an even generating set $\mathcal{C}$ for $\mathcal{F}_q(G)$ there exists a basis $\mathcal{C}'$ consisting only of cycles for which the parameters $d,\iota,\ell$ and $s$ are all not worse.  
To see this note that if $\mathcal{C}$ is a generating set and not a basis, we can always remove elements from it to make it into a basis.
If $\mathcal{C}$ forms a basis and some $C\in \mathcal{C}$ is the edge disjoint union of two nonempty even subgraphs $K_1$ and $K_2$, we have that either $(\mathcal{C}\setminus \{C\})\cup \{K_1\}$ or $(\mathcal{C}\setminus \{C\})\cup \{K_2\}$ forms a basis. 
This is generally not true for composite $q$ and therefore we work with even generating sets.
\end{remark}




\subsection{Preliminaries on Markov chains}

To analyse the mixing time of our Markov chains, we will use the path coupling technique. We briefly recall the following results from Section 2 in \cite{DG98}.

Let $\mathcal{M} = (Z_t)_{t=0}^\infty$ be an ergodic, discrete-time Markov chain on a finite state space $\Omega$ with transition matrix $P$. Let $\mu_t$ be the distribution of $Z_t$ and let $\mu$ be the (unique) stationary distribution of $\mathcal{M}$.
Two distributions on $\Omega$ are said to be \emph{$\delta$-close} if the total variation distance between them is at most $\delta$. The \emph{$\delta$-mixing time} of $\mathcal{M}$ is the minimum number of steps after which $\mathcal{M}$ is $\delta$-close to its stationary distribution (i.e.\ the smallest $t$ such that $\|\mu_t - \mu \|_{\mathrm{TV}} \leq \delta$). 

A \emph{coupling} for $\mathcal{M}$ is a stochastic process $(X_t,Y_t)$ on $\Omega^2$, such that  each of $X_t$ and $Y_t$, considered independently, transition according to $P$. More precisely, the coupling can be defined by its transition matrix $P'$: given $(x, y)$ and $(x',y') \in \Omega^2$, $P'((x,y),(x',y'))$ is the probability that $(X_{t+1}, Y_{t+1}) = (x',y')$ given that $(X_t,Y_t) = (x,y)$. For $P'$ to describe a valid coupling, it must satisfy for each $(x,y) \in \Omega^2$, that
\begin{align}
\sum_{y' \in \Omega}P'((x,y),(x',y')) &= P(x,x') \quad \text{ for all } x' \in \Omega; \nonumber \\
\sum_{x' \in \Omega}P'((x,y),(x',y')) &= P(y,y') \quad \text{ for all } y' \in \Omega  .
\label{eq:coupling}
\end{align}



For our use of path coupling, we require an integer-valued distance function $d$ on $\Omega$ such that between any two states $x,y\in\Omega$ there exists a sequence $x=x_0,x_1,\dots,x_s=y$ in which consecutive states are at distance $1$. 
If we can define a coupling on the set of pairs $(x,y) \in \Omega^2$ for which $d(x,y) =1$.  (that is, we define transition probabilities $P'((x,y),(x',y'))$ for all $(x,y)$ such that $d(x,y)=1$, and $(x',y')\in \Omega^2$ that satisfy equations \eqref{eq:coupling}) then this can be extended to a complete coupling on $\Omega^2$.  We can use such a (partial) coupling to bound the mixing time of $\mathcal{M}$ via the following result:


\begin{theorem}[Theorem 2.2 in \cite{DG98}]\label{thm:path coupling}
Let $\mathcal{M}$ be a Markov chain on $\Omega$ and $d$ an integer-valued distance on $\Omega$ as above with maximum distance $D$. Assume there is a coupling $(X_t,Y_t) \mapsto (X_{t+1},Y_{t+1})$ defined for all pairs with $d(X_t,Y_t)=1$ (as described above) such that
\[
\mathbb{E}(d(X_{t+1},Y_{t+1}) \mid (X_t,Y_t)) \leq 1-\alpha
\]
for some $\alpha>0$. Then the Markov chain $\mathcal{M}$ has $\delta$-mixing time at most $\frac{\log(D\delta^{-1})}{\alpha}$.
\end{theorem}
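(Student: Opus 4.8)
The plan is to prove this in two stages: first upgrade the contraction estimate on distance-$1$ pairs to a contraction estimate on arbitrary pairs via the path / triangle-inequality argument, and then convert the one-step contraction into a mixing-time bound via a standard expectation-and-Markov-inequality argument.

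First I would set up the extension of the coupling. Given any pair $(x,y)\in\Omega^2$, fix a path $x=x_0,x_1,\dots,x_s=y$ with $d(x_i,x_{i+1})=1$ and $s=d(x,y)$ (such a path exists by the hypothesis on $d$, and can be taken to have length exactly $d(x,y)$ by concatenating unit steps). The coupling on distance-$1$ pairs gives, for each consecutive pair $(x_i,x_{i+1})$, a joint transition; compose these couplings along the path to obtain a joint distribution of $(X^{(0)}_{t+1},X^{(1)}_{t+1},\dots,X^{(s)}_{t+1})$ whose $i$-th and $(i{+}1)$-th marginals agree (as a pair) with the prescribed distance-$1$ coupling, and hence each individual marginal $X^{(i)}_{t+1}$ evolves according to $P$. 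Taking the pair $(X^{(0)}_{t+1},X^{(s)}_{t+1})$ defines a valid global coupling. By the triangle inequality for $d$ and linearity of expectation,
\[
\mathbb{E}\bigl(d(X_{t+1},Y_{t+1})\mid X_t=x,Y_t=y\bigr)\le \sum_{i=0}^{s-1}\mathbb{E}\bigl(d(X^{(i)}_{t+1},X^{(i+1)}_{t+1})\bigr)\le s(1-\alpha)=(1-\alpha)\,d(x,y).
\]
Iterating, $\mathbb{E}(d(X_t,Y_t))\le (1-\alpha)^t d(X_0,Y_0)\le (1-\alpha)^t D$ for any starting states.

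Next I would convert this to a mixing bound. Since $d$ is integer-valued, $d(X_t,Y_t)\ge 1$ whenever $X_t\ne Y_t$, so by Markov's inequality $\Pr[X_t\ne Y_t]\le \mathbb{E}(d(X_t,Y_t))\le (1-\alpha)^t D$. Start $Y_0$ from the stationary distribution $\mu$ (so $Y_t\sim\mu$ for all $t$) and $X_0$ from an arbitrary state; the coupling inequality gives $\|\mu_t-\mu\|_{\mathrm{TV}}\le \Pr[X_t\ne Y_t]\le (1-\alpha)^t D\le e^{-\alpha t}D$. This is at most $\delta$ once $t\ge \alpha^{-1}\log(D\delta^{-1})$, which is the claimed bound.

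The main obstacle — really the only non-routine point — is the construction of the path coupling along $x_0,\dots,x_s$: one must check that gluing the distance-$1$ couplings does yield a process whose single-coordinate marginals each follow $P$, so that the pair $(X^{(0)},X^{(s)})$ is a legitimate coupling to which the coupling lemma applies. This is handled by composing the conditional kernels step by step (the $i$-th step's kernel is applied conditionally on the realized value of $X^{(i)}_{t+1}$), using the marginal conditions \eqref{eq:coupling} at each junction; everything else (triangle inequality, linearity of expectation, Markov's inequality, the coupling inequality for total variation distance) is standard.
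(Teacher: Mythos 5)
The paper does not prove this statement at all: it is imported verbatim as Theorem~2.2 of \cite{DG98}, so there is no internal proof to compare against. Your argument is the standard Bubley--Dyer/Dyer--Greenhill proof of path coupling (compose the distance-one couplings sequentially along a path, use the triangle inequality to get $\mathbb{E}(d(X_{t+1},Y_{t+1})\mid X_t,Y_t)\le(1-\alpha)d(X_t,Y_t)$, iterate, then apply Markov's inequality and the coupling inequality for total variation), and it is correct, including the key verification that the sequentially composed kernel has the right single-coordinate marginals. The one point worth flagging is your parenthetical claim that the path can be taken to have length exactly $d(x,y)$: this does not follow from unit-step connectivity of an arbitrary integer-valued metric alone, and without it the contraction $\le s(1-\alpha)$ along a path of length $s>d(x,y)$ does not iterate; it is harmless here because in this paper (as in \cite{DG98}) $d$ is the path metric itself --- the minimal number of chain moves --- so geodesic unit-step paths exist by definition, but the assumption should be stated rather than attributed to the bare hypothesis on $d$.
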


\section{Flow Markov chain}
\label{se:flowchain}

In this section, we introduce and analyse the flow Markov chain and use it to prove Theorem~\ref{thm:main sampling}(i).

\begin{definition}
Let $G=(V,E)$ be a graph and $\mathcal{C}$ an even generating set of $\mathcal{F}_q(V,E)$ of size $r$.
The \emph{flow Markov chain} for $(G,\mathcal{C})$ is a Markov chain on the state space $\mathcal{F}_q(V,E)$. For every flow $f\in\mathcal{F}_q(V,E)$, $t\in\mathbb{Z}_q\setminus\{0\}$ and $C\in\mathcal{C}$, the transition probabilities of the Markov chain are given by:
\begin{align*}
P_\mathrm{flow}(f,f+t\chi_C) &=\frac{1}{r} \frac{\mu_\mathrm{flow}(f+t\chi_C)}{\sum_{u\in\Z_q} \mu_\mathrm{flow}(f+u\chi_C)},\\
P_\mathrm{flow}(f,f) &= \frac{1}{r}\sum_{C\in\mathcal{C}}\frac{\mu_\mathrm{flow}(f)}{\sum_{u\in\Z_q} \mu_\mathrm{flow}(f+u\chi_C)},
\end{align*}
and all other transition probabilities are zero.
\end{definition}
We see easily that the measure $\mu_\mathrm{flow}$ satisfies the detailed balance equation \[\mu_\mathrm{flow}(f) P_\mathrm{flow}(f,f+t\chi_C) = \mu_\mathrm{flow}(f+t\chi_C) P_\mathrm{flow}(f+t\chi_C,f),\] so $\mu_\mathrm{flow}$ is the stationary distribution of the flow Markov chain.

We can simulate one step of this Markov efficiently by first selecting $C\in\mathcal{C}$ uniformly at random, and for $t\in\Z_q$, selecting $f+t\chi_C$ with probability proportional to
\[ \mu_\mathrm{flow}(f+t\chi_C)/\mu_\mathrm{flow}(f) = x^{\#\{e\in C \mid f(e)=0\} - \#\{e\in C \mid f(e)+t\chi_C(e)=0\}}.
\]
For fixed $q$, simulating one step of the Markov chain requires $O(\ell)$ time (where $\ell = \max_{C \in \mathcal{C}}|C|$) in order to compute $f + t \chi_C$ and its support. We bound this by $O(m)$.

\subsection{Rapid mixing of flow Markov chain}
\begin{theorem}\label{thm:mixing time flow MC}
Let $q,d\geq2, \iota\geq1$ be integers and $1>x>1-\frac{2}{(d+1)\iota}$.
Write $\xi=x-\left(1-\frac{2}{(d+1)\iota}\right)$ and let $\delta>0$. 
Now let $G=(V,E)$ be a graph and $\mathcal{C}$ an even generating set of $\mathcal{F}_q(G)$ of size $r$ satisfying $d(\mathcal{C})\leq d$ and $\iota(\mathcal{C})\leq \iota$,
then the $\delta$-mixing time of the flow Markov chain for $(G,\mathcal{C})$ with parameter $x$ is at most $\frac{4r}{d\iota}\log(r \delta^{-1})\xi^{-1}$.
\end{theorem}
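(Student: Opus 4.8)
The plan is to apply the path coupling theorem (Theorem~\ref{thm:path coupling}) directly. First I would set up the distance function on the state space $\Omega = \mathcal{F}_q(V,E)$: since $\mathcal{C}$ is a generating set of $\mathcal{F}_q(G)$ by Lemma~\ref{lem:generating set for all}, every flow $f$ can be written as $\sum_i a_i \chi_{C_i}$, so the natural candidate is a word-length metric where $d(f,g)$ is the minimum number of ``moves'' $g = f + t_1\chi_{C_{i_1}} + \cdots + t_k\chi_{C_{i_k}}$ (with $t_j \in \Z_q\setminus\{0\}$) needed to get from $f$ to $g$; then pairs at distance $1$ are exactly pairs of the form $(f, f+t\chi_C)$. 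The maximum distance $D$ is at most $r(q-1) \leq rq$, which will contribute only a $\log$ factor. The more subtle point is that Theorem~\ref{thm:path coupling} wants an \emph{integer-valued} distance with unit-step paths, which this metric satisfies, but one must make sure the final contraction bound does not depend on the finer structure of the metric — only on distance-$1$ pairs.

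Next I would construct the coupling on a pair $(X_t,Y_t) = (f, f+t\chi_C)$ at distance $1$. The natural coupling: pick the same $C' \in \mathcal{C}$ uniformly at random in both chains, then couple the choices of offset $u, u' \in \Z_q$ (giving $f + u\chi_{C'}$ and $f + t\chi_C + u'\chi_{C'}$) as well as possible — e.g.\ via a maximal coupling of the two conditional distributions over $\Z_q$. There are two regimes. If $C' \cap C = \emptyset$ (and more relevantly, $C'$ doesn't interact with $C$ in a way that changes supports), then the two conditional distributions over $u$ are identical, we can set $u = u'$, and the distance stays $1$ — no change. If $C'$ meets $C$, i.e.\ $C' = C$ or $C' \cap C \neq \emptyset$, then the distributions may differ; in the worst case the distance increases to $2$ (when $C' \neq C$, the new state is $f + t\chi_C + u'\chi_{C'}$ vs $f+u\chi_{C'}$, differing by two generators) or, when $C' = C$, we have a genuine chance to couple to distance $0$. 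The number of ``bad'' $C'$ (those with $C'\cap C\neq\emptyset$, including $C$ itself) is at most $d(C)+1 \leq d+1$.

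The heart of the estimate is then: (a) show that when $C' = C$, the chosen coupling collapses the distance to $0$ with probability bounded below — this is where $x$ close to $1$ helps, since the two conditional distributions $u \mapsto \mu_\mathrm{flow}(f+u\chi_C)/(\cdots)$ and $u\mapsto \mu_\mathrm{flow}(f+t\chi_C + u\chi_C)/(\cdots)$ are close in total variation when $x$ is near $1$ (both are close to uniform on $\Z_q$ up to a factor controlled by $x^{\pm |C|}$, but more precisely the supports of $f$ and $f+t\chi_C$ restricted to $C$ differ, and the Radon–Nikodym-type ratio is bounded in terms of $x$); and (b) bound the probability of an increase from the $\leq d$ many $C' \neq C$ with $C'\cap C\neq\emptyset$, where each contributes at most $+1$ to the distance and the ``defect'' in the coupling of the $u$-choices is again controlled by how much the supports on $C'$ differ, which involves $|C\cap C'| \leq \iota$ edges. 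Putting it together, $\mathbb{E}(d(X_{t+1},Y_{t+1})) - 1 \leq \frac{1}{r}\big( (\text{gain from } C'=C) - (\text{loss from the } \leq d \text{ neighbours})\big)$, and one needs this to be $\leq -\alpha$ with $\alpha$ of order $\frac{d\iota\xi}{r}$ so that Theorem~\ref{thm:path coupling} gives mixing time $\log(D\delta^{-1})/\alpha = O\!\big(\frac{r}{d\iota\xi}\log(r\delta^{-1})\big)$, matching the claimed bound $\frac{4r}{d\iota}\log(r\delta^{-1})\xi^{-1}$.

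The main obstacle I anticipate is step (a)/(b) above: getting a \emph{quantitative, clean} lower bound on the coupling gain at $C'=C$ and matching upper bound on the loss at interacting $C'\neq C$, both expressed through $x$, $d$, $\iota$ in such a way that the threshold comes out exactly at $x > 1 - \frac{2}{(d+1)\iota}$ with the stated constant. Concretely, one must analyze the conditional laws over $\Z_q$: for a generator $C$ and a base flow $h$, the law $u \mapsto x^{|\supp(h)\cap C| - |\supp(h+u\chi_C)\cap C|}$ normalized over $u\in\Z_q$. One needs that changing $h$ on the $\leq\iota$ edges of $C\cap C'$ perturbs this law by at most something like $(1-x)\iota$ in total variation, and that when the base flows genuinely differ on all of $C$ (the $C'=C$ case after the distance-$1$ discrepancy $t\chi_C$) the overlap of the two laws is at least a constant bounded away from $0$. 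Balancing ``$+1$ with probability $\lesssim \frac{d(1-x)\iota}{r}$ (roughly)'' against ``$-1$ with probability $\gtrsim \frac{c}{r}$'' and tracking constants carefully to land on $\xi = x - (1-\frac{2}{(d+1)\iota})$ is the delicate bookkeeping; everything else (ergodicity via the detailed balance already noted, the value of $D$, invoking Theorem~\ref{thm:path coupling}) is routine.
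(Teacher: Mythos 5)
Your overall strategy is the paper's: path coupling with the word metric induced by the generating set, a case split on whether the uniformly chosen generator $D$ equals $C$, is disjoint from $C$, or meets $C$, and a contraction estimate whose constants are then converted into $\alpha\geq \tfrac{d\iota}{4r}\xi$. However, there is a genuine gap at the case $D=C$, which is exactly where the paper gets its contraction for free. You propose to couple the two offset choices $u,u'\in\Z_q$ maximally and argue that the distance collapses to $0$ ``with probability bounded below'', using closeness in total variation of the two offset laws when $x$ is near $1$. This misses the key structural fact: when $D=C$, both chains resample within the \emph{same} coset $\{f+u\chi_C : u\in\Z_q\}$, and the conditional distributions of the \emph{new states} (not of the offset labels) are identical, for every $x$. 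Hence one can couple so that the two chains land on the same state with probability $1$ whenever $D=C$ (the paper's case (a)), contributing a deterministic $-1/r$ to the drift. Note also that matching offsets $u=u'$, as your maximal coupling over $\Z_q$-labels would tend to do, does \emph{not} collapse the distance at all (one needs $u'=u-t$). If the collapse probability at $D=C$ were only a constant, or $1-O((1-x)|C|)$, the balance against the at most $d$ intersecting generators would not produce the stated threshold $x>1-\tfrac{2}{(d+1)\iota}$ nor the constant $\tfrac{4r}{d\iota}$; your bookkeeping as described cannot land on the theorem as stated.

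Second, the quantitative heart of the argument — the intersecting case $D\neq C$, $D\cap C\neq\emptyset$ — is only gestured at. The paper proves (via the technical Lemma~\ref{lem:sum of minima}, applied with $a_i,b_i$ counting zero edges of $D$ in $f+i\chi_D$ and $g+i\chi_D$, which satisfy $\sum_i a_i=\sum_i b_i$ and $\sum_i|a_i-b_i|\leq 2|C\cap D|\leq 2\iota$) that a suitable coupling keeps the two chains moving together with probability at least $1-\tfrac{1-x^\iota}{1+x^\iota}$; your ``perturbs the law by at most something like $(1-x)\iota$ in total variation'' is of the right order near $x=1$ but is not established, and it is precisely this estimate that drives the final threshold. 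Two smaller points: the diameter under your metric is at most $r$ (one move can change a coefficient to any value in $\Z_q$), not $r(q-1)$, and using the looser bound would already spoil the exact bound $\tfrac{4r}{d\iota}\log(r\delta^{-1})\xi^{-1}$; and the final step from the drift $1-\tfrac{1}{r}+\tfrac{d}{r}\cdot\tfrac{1-x^\iota}{1+x^\iota}$ to $\alpha\geq\tfrac{d\iota}{4r}\xi$ (the paper does this via a derivative estimate in $x$) also needs to be carried out, not just asserted as ``of order $\tfrac{d\iota\xi}{r}$''.
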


\begin{remark}
\label{re:runtime}
Because $\xi <\frac{2}{(d+1)\iota}\leq\frac{2}{d\iota}$, the upper bound in this Theorem is always at least $2r\log(r\delta^{-1})$. This shows the upper bound doesn't get better with larger $d$ and $\iota$, even though they are in the denominator.
\end{remark}

For the given range of $x$, the flow Markov chain therefore gives  an efficient, randomised algorithm for approximately sampling flows according to $\mu_\mathrm{flow}$.
Combining this with \cref{prop:flow sampler to RC/Potts}, we obtain the following Corollary; it directly implies \cref{thm:main sampling}(i) by \cref{lem:generating set for all}.

 \begin{cor} 
  Fix integers $q,d\geq 2$ and $\iota\geq 1$. For any $w>\frac{(d+1)\iota}{2}q-(q-1)$ and $\delta>0$, there exists an algorithm that on input of an $m$-edge graph $G$ and even generating set $\mathcal{C}$ of $\mathcal{F}_q(G)$ of size $r$ satisfying $d(\mathcal{C})\leq d$ and $\iota(\mathcal{C})\leq \iota$ outputs a $q$-state Potts colouring $\sigma:V\to [ q ] $ within total variation distance $\delta$ of the $q$-state Potts-measure $\mu_\mathrm{Potts}$ with parameter $w$. 
  This is obtained by running the flow Markov chain for  at most $O(r\log(r \delta^{-1}))$ steps where each step takes $O(m)$ time.
\end{cor}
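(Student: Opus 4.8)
The idea is to read the Corollary as a transfer statement: the substantive input is the rapid-mixing bound \cref{thm:mixing time flow MC} for the flow chain, and the rest is translating parameters and pushing an approximate flow sample through the flow $\to$ random-cluster $\to$ Potts couplings supplied by \cref{prop:flow sampler to RC/Potts}. First I would fix the flow parameter $x$ that corresponds to the given Potts parameter $w$.

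By \cref{lem:tutte} (equivalently \cref{lem: pf flow=pf RC}) the flow model at $x$ matches the Potts model at $w=\frac{1+(q-1)x}{1-x}$; solving for $x$ gives $x=\frac{w-1}{w+q-1}$, whence $1-x=\frac{q}{w+q-1}$. Since $d\ge2$, $\iota\ge1$, $q\ge2$ give $\frac{(d+1)\iota}{2}q-(q-1)\ge 2$, the hypothesis forces $w>1$, so $x\in(0,1)$. Moreover $w>\frac{(d+1)\iota}{2}q-(q-1)$ is equivalent to $w+q-1>\frac{(d+1)\iota}{2}q$, i.e.\ to $1-x<\frac{2}{(d+1)\iota}$, i.e.\ to $x>1-\frac{2}{(d+1)\iota}$, which is exactly the admissible range of $x$ in \cref{thm:mixing time flow MC}. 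Checking that these two thresholds match on the nose is the one computational point where care is needed.

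Now I would run the flow Markov chain for $(G,\mathcal{C})$ with this $x$. Applying \cref{thm:mixing time flow MC} with accuracy $\delta$ and $\xi=x-\big(1-\frac{2}{(d+1)\iota}\big)>0$, the $\delta$-mixing time is at most $\frac{4r}{d\iota}\xi^{-1}\log(r\delta^{-1})$; since $q,d,\iota,w$ are fixed, $\xi$ is a fixed positive constant and this is $O(r\log(r\delta^{-1}))$ steps, each costing $O(\ell)=O(m)$ time to compute $f+t\chi_C$ and its support (as recorded after the definition of the chain). So iterating the chain from any fixed starting flow for that many steps yields a sample within total variation $\delta$ of $\mu_\mathrm{flow}$ with parameters $q,x$. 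Feeding this into \cref{prop:flow sampler to RC/Potts} (second bullet) — select each non-support edge independently with probability $x$, then colour each component of the resulting edge set by an independent uniform colour in $[q]$ — produces, in a further $O(m)$ time, a colouring $\sigma:V\to[q]$ within total variation $\delta$ of $\mu_{\mathrm{Potts},G;q,w'}$ with $w'=\frac{1+(q-1)x}{1-x}=w$, which is the required output. That total variation distance is not amplified by these two post-processing steps is precisely the triangle-inequality estimate in the discussion preceding \cref{prop:flow sampler to RC/Potts}.

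Given \cref{thm:mixing time flow MC}, there is no real obstacle — the argument is the bookkeeping above, the only delicate point being the exact threshold match. If one wanted a self-contained proof, the hard part would be \cref{thm:mixing time flow MC} itself, which I would attack by path coupling (\cref{thm:path coupling}): use the integer distance on $\mathcal{F}_q(G)$ under which $f$ and $f+t\chi_C$ are adjacent for $C\in\mathcal{C}$, $t\in\Z_q\setminus\{0\}$ (a path between any two flows exists since $\mathcal{C}$ generates $\mathcal{F}_q(G)$), couple the one-step evolutions of two such adjacent flows under $P_\mathrm{flow}$, and bound the expected distance after one step. The estimate should contract by a factor of order $d\iota\xi/r$: the coupling can keep the two chains together unless the chosen generator $D$ is one of the $\le d(C)\le d$ generators meeting $C$, and even then $|C\cap D|\le\iota$ limits the discrepancy, while the assumption $x>1-\frac{2}{(d+1)\iota}$ is exactly what makes the "good" contribution outweigh these "bad" moves. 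Plugging the contraction factor and the diameter bound into \cref{thm:path coupling} would then give the stated mixing time.
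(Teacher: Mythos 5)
Your proposal is correct and follows essentially the same route as the paper: translate $w$ to $x=\frac{w-1}{w+q-1}$ (so that the hypothesis $w>\frac{(d+1)\iota}{2}q-(q-1)$ becomes exactly $x>1-\frac{2}{(d+1)\iota}$), invoke \cref{thm:mixing time flow MC} for an $O(r\log(r\delta^{-1}))$-step, $O(m)$-per-step sampler for $\mu_\mathrm{flow}$, and push the sample through \cref{prop:flow sampler to RC/Potts} to get a Potts configuration at parameter $w$ without loss in total variation. The threshold check and the bookkeeping are exactly what the paper's (brief) derivation relies on; the extra sketch of the path-coupling proof of \cref{thm:mixing time flow MC} is not needed for the corollary but is consistent with the paper.
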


The following technical lemma will be used in the proof of \cref{thm:mixing time flow MC}. Note that the lower bound is actually attained in the limit case $(a_1,\ldots,a_q)=(\iota,0,-\infty,\ldots,-\infty), (b_1,\ldots,b_q)=(0,\iota,-\infty,\ldots,-\infty)$. The proof is postponed to the end of this section.
\begin{lemma}\label{lem:sum of minima}
  Let $x\in(0,1)$ be a real number, and $\iota\geq0$ and $a_1,\ldots,a_q,b_1,\ldots,b_q$ integers satisfying the following constraints:
\begin{itemize}
  \item $\sum_i a_i = \sum_i b_i$;
  \item $\sum_i |a_i-b_i| \leq 2\iota$.
\end{itemize}
Then
\[
  S \coloneqq \sum_i \min\left(\frac{x^{-a_i}}{\sum_j x^{-a_j}},\frac{x^{-b_i}}{\sum_j x^{-b_j}}\right) \geq 1- \frac{1-x^\iota}{1+x^\iota}.
\]
\end{lemma}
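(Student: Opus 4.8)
\textbf{Proof plan for Lemma~\ref{lem:sum of minima}.}

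The plan is to reduce the $q$-variable optimization to an essentially one-parameter problem by exploiting the structure of the minimum of two probability distributions. First I would rewrite $S$ using the identity $\min(p_i,r_i) = \tfrac12(p_i + r_i - |p_i - r_i|)$ for probability vectors $(p_i)$ and $(r_i)$; since both $\sum_i p_i = 1$ and $\sum_i r_i = 1$, this gives $S = 1 - \tfrac12 \sum_i |p_i - r_i| = 1 - \|p - r\|_{\mathrm{TV}}$, where $p_i = x^{-a_i}/\sum_j x^{-a_j}$ and $r_i = x^{-b_i}/\sum_j x^{-b_j}$. So the claim is equivalent to showing $\|p-r\|_{\mathrm{TV}} \le \tfrac{1-x^\iota}{1+x^\iota}$ under the given constraints on the integer vectors $(a_i)$, $(b_i)$.

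Next I would try to show that the total variation distance is maximized at the claimed extremal configuration. The constraint $\sum_i |a_i - b_i| \le 2\iota$ together with $\sum_i(a_i - b_i) = 0$ says that the positive part and negative part of the vector $(a_i - b_i)$ each sum to at most $\iota$. Intuitively, to push $p$ and $r$ as far apart as possible one wants to concentrate all the ``mass movement'': put $a_i - b_i = \iota$ on one coordinate, $a_j - b_j = -\iota$ on another, and $a_k = b_k$ elsewhere; and then to make the ratios $x^{-a}$ as lopsided as possible one sends the untouched coordinates' weights to $0$ (the limiting case $a = (\iota, 0, -\infty, \dots)$, $b = (0,\iota,-\infty,\dots)$ mentioned in the statement). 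I would make this rigorous by a perturbation/exchange argument: fix all but two coordinates and show $S$ is minimized (equivalently TV is maximized) when the differences on those two coordinates are as extreme as allowed, and when the common weight on coordinates with $a_i = b_i$ is pushed down. Concretely, one can parametrize: let $A = \sum_j x^{-a_j}$, $B = \sum_j x^{-b_j}$, split the index set into $S_+ = \{i : a_i > b_i\}$, $S_- = \{i : a_i < b_i\}$, $S_0 = \{i : a_i = b_i\}$, and observe $p_i > r_i$ exactly when $x^{-a_i} B > x^{-b_i} A$, so the sign pattern of $p_i - r_i$ is controlled but not identical to that of $a_i - b_i$ because of the normalizing constants.

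The cleanest route is probably to bound $\|p - r\|_{\mathrm{TV}}$ directly. Writing $T_+ = \sum_{i : p_i > r_i}(p_i - r_i)$, we have $\|p-r\|_{\mathrm{TV}} = T_+$. For each such $i$, $p_i - r_i = \tfrac{x^{-a_i}}{A} - \tfrac{x^{-b_i}}{B} = \tfrac{x^{-b_i}}{B}\bigl(\tfrac{B}{A}x^{b_i - a_i} - 1\bigr)$, and since $|a_i - b_i| \le \iota$ is NOT individually guaranteed — only $\sum |a_i - b_i| \le 2\iota$ — I would instead use that $x^{b_i - a_i} \le x^{-\iota}$ is false in general, so care is needed; the right bound comes from combining all coordinates. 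I would compare $B/A$: since each $a_i - b_i$ has absolute value summing to $\le 2\iota$ and they sum to zero, one gets $x^{\iota} \le B/A \le x^{-\iota}$ by a convexity/rearrangement estimate on $A = \sum x^{-a_i}$ versus $B = \sum x^{-b_i}$. Then the worst case has all the discrepancy on a single coordinate pair, and a direct computation on the two-point configuration $(\iota, 0)$ vs $(0,\iota)$ with vanishing remaining weights yields exactly $T_+ = \tfrac{x^{-\iota} - x^{0}/(\cdots)}{\cdots}$, which simplifies to $\tfrac{1 - x^\iota}{1 + x^\iota}$.

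\textbf{Main obstacle.} The subtle point is that the sign pattern of $p_i - r_i$ is not simply the sign pattern of $a_i - b_i$: the normalizing constants $A$ and $B$ shift things, so a coordinate with $a_i = b_i$ can still contribute to $T_+$ or to $T_-$. Handling this — i.e.\ proving that the extremal configuration is the stated limiting one and that no ``spreading out'' of the differences or no nonzero common coordinates can help — is where the real work lies. I expect the convexity argument bounding $B/A$ in terms of $x^{\pm\iota}$, followed by a careful case analysis (or Lagrange-multiplier / smoothing argument) reducing to two active coordinates, to be the core of the proof; the final arithmetic on the two-point case is routine.
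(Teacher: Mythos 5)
Your reformulation $S = 1 - \|p-r\|_{\mathrm{TV}}$ is correct and the extremal configuration you identify is the right one, but what you have written is a plan whose central step is missing: you never actually prove that the worst case is the two-point limiting configuration. The exchange/smoothing argument you defer to ("a careful case analysis (or Lagrange-multiplier / smoothing argument) reducing to two active coordinates... is where the real work lies") is exactly the content of the paper's proof, and it is not routine: the paper perturbs the $a_i$'s (increase $a_t$ by $1$, decrease $a_u$ by $1$ for two indices at which the $b$-term achieves the minimum, assuming WLOG $\sum_j x^{-a_j}\ge\sum_j x^{-b_j}$), and one must verify both that $S$ does not increase and, more delicately, that the constraint $\sum_i|a_i-b_i|\le 2\iota$ is preserved — which uses the fact that at such indices $a_i>b_i$. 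After this reduction the paper does \emph{not} pass to a two-coordinate problem; it keeps all $q$ coordinates, with at most one index where the $b$-term is the minimum, and finishes with the estimates $x^{-a_1}\le x^{-(b_1+\iota)}$, $\sum_{j\ge2}x^{-a_j}\ge x^{\iota}\sum_{j\ge2}x^{-b_j}$ and an AM--GM step to get the sharp constant. None of this is supplied or substituted for in your proposal.

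Two further concrete issues. First, your claim that "$|a_i-b_i|\le\iota$ is NOT individually guaranteed" is wrong: since $\sum_i(a_i-b_i)=0$, for any $j$ one has $2\iota\ge\sum_i|a_i-b_i|\ge|a_j-b_j|+\bigl|\sum_{i\ne j}(a_i-b_i)\bigr|=2|a_j-b_j|$, so the individual bound does hold (this is the first observation in the paper's proof, and it is what legitimizes your estimate $x^{\iota}\le B/A\le x^{-\iota}$). Second, the crude bounds you do sketch are not enough for the stated constant: combining $p_i\le x^{-\iota}(B/A)\,r_i$ with $B/A\le x^{-\iota}$ only yields $\|p-r\|_{\mathrm{TV}}\le 1-x^{2\iota}$, which is weaker than $\tfrac{1-x^{\iota}}{1+x^{\iota}}$. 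So the sharp inequality genuinely requires the extremality/smoothing argument that your proposal leaves as a gap.
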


\begin{proof}[Proof of \cref{thm:mixing time flow MC}]
  To prove the theorem we determine an upper bound for the mixing time of the flow Markov chain by using path coupling. For this we define the distance between two flows as the minimal number of steps the flow Markov chain needs to go from one to the other. By \cref{thm:path coupling} it is now enough to define a coupling for states at distance 1. If the expected distance after one step of this coupling is at most $1-\alpha$, the mixing time of the Markov chain is at most $T \coloneqq \frac{\log(r\delta^{-1})}{\alpha}$. (The maximal distance in $\mathcal{F}_q(V,E)$ is at most $r$, because in $r$ steps the coefficients of every even set in $\mathcal{C}$ can be adjusted to the desired value.)
  
  We will construct a coupling on states at distance $1$ for which $\alpha=\frac{(d+1)x^{\iota}-(d-1)}{2r} \geq \frac{d\iota}{4r}\xi$. Therefore the running time of the sampler is bounded by $T\leq \frac{4r}{d\iota}\log(r\delta^{-1})\xi^{-1}$ steps of the flow Markov chain.
  
  Consider a pair of flows $(f,g)$ which differ by a multiple of $\chi_C$. To construct the coupling we first select u.a.r.\@ an even set $D\in\mathcal{C}$. We will separate three cases, and define the transition probabilities in each of these cases. The cases are (a) when $C=D$, (b) when $C$ and $D$ have no common edges, and (c) when $C$ and $D$ do have common edges.
  
  \begin{enumerate}[(a)]
      \item We get a valid coupling by making the transition $(f,g) \to (f+t\chi_D,f+t\chi_D)$ with probability $\frac{\mu_\mathrm{flow}(f+t\chi_D)}{\sum_{u\in\Z_q} \mu_\mathrm{flow}(f+u\chi_D)}$. Then the distance will always drop from 1 to 0.
      \item Now the edges of $D$ have the same values in $f$ and $g$, and we see that $\mu_\mathrm{flow}(f+t\chi_D)/\mu_\mathrm{flow}(f) = \mu_\mathrm{flow}(g+t\chi_D)/\mu_\mathrm{flow}(g)$ for all $t$. Therefore we get a valid coupling by making the transition $(f,g) \to (f+t\chi_D , g+t\chi_D)$ with probability $\frac{\mu_\mathrm{flow}(f+t\chi_D)}{\sum_{u\in\Z_q} \mu_\mathrm{flow}(f+u\chi_D)}=\frac{\mu_\mathrm{flow}(g+t\chi_D)}{\sum_{u\in\Z_q} \mu_\mathrm{flow}(g+u\chi_D)}$. In this case the distance between the two new states remains 1.
      \item The coupling in this case is more complicated, as the values of $f$ and $g$ on $D$ are different. Below we prove the following:
      \begin{claim}
      There is a coupling where the total probability for all transitions $(f,g) \to (f+t\chi_D,g+t\chi_D)$ is at least $1-\frac{1-x^\iota}{1+x^\iota}$.
      \end{claim}
      In all these transitions the distance remains 1, and therefore the probability of the distance increasing to 2 is at most $\frac{1-x^\iota}{1+x^\iota}$.
  \end{enumerate}
  
  We can now calculate the expected distance after one step of this coupling. Case (a) occurs with probability $1/r$, and case (c) with probability at most $d/r$. Hence the expected distance is at most
  \begin{align*}
      &1-\frac{1}{r} + \frac{d}{r}\cdot\frac{1-x^\iota}{1+x^\iota}\\
      &=1- \frac{1+x^\iota-d(1-x^\iota)}{r(1+x^\iota)}\\
      &=1- \frac{(d+1)x^\iota-(d-1)}{r(1+x^\iota)}\\
      &\leq 1- \frac{(d+1)x^\iota-(d-1)}{2r} = 1-\alpha.
  \end{align*}
  We see that $\alpha$ is positive for $x>1-\frac{2}{(d+1)\iota}>\sqrt[\iota]{1-\frac{2}{d+1}}= \sqrt[\iota]{\frac{d-1}{d+1}}$. Further, we see for these $x$ that the derivative of $\alpha$ with respect to $x$ satisfies,
  \[
  \frac{\mathrm{d}\alpha}{\mathrm{d}x} = \frac{(d+1)\iota x^{\iota-1}}{2r} \geq \frac{(d+1)\iota x^\iota}{2r} \geq \frac{(d-1)\iota}{2r} \geq \frac{d\iota}{4r}.
  \]
  Hence we find that $\alpha \geq \frac{d\iota}{4r}\xi$.

We finish by proving the Claim in case (c).
Explicitly the transition probabilities in this case are given by (writing $p_t=\frac{\mu_\mathrm{flow}(f+t\chi_D)}{\sum_{u\in\Z_q} \mu_\mathrm{flow}(f+u\chi_D)}$ and $q_t=\frac{\mu_\mathrm{flow}(g+t\chi_D)}{\sum_{u\in\Z_q} \mu_\mathrm{flow}(g+u\chi_D)}$)
\[
(f,g) \to (f+t\chi_D,g+t\chi_D) \text{ with probability } \min(p_t,q_t),
\]
and for $s\neq t$
\begin{align*}
(f,g)\to(f+s\chi_D,g+t\chi_D) \text{ with probability  } &\frac{(p_s-\min(p_s,q_s))(q_t-\min(p_t,q_t))}{\sum_{u\in\Z_q} (p_u-\min(p_u,q_u))} 
\\
&=\frac{(p_s-\min(p_s,q_s))(q_t-\min(p_t,q_t))}{\sum_{u\in\Z_q} (q_u-\min(p_u,q_u))}.
\end{align*}
It is easily checked that this yields a valid coupling, i.e.\@ that the first coordinate has transition probabilities $p_t$, and similary $q_t$ for the second coordinate.

Now we wish to bound the sum of the diagonal entries.
To do this we have to take a closer look at the weights occurring in this table. We define $a_i$ to be the number of edges in $D$ with value 0 in the flow $f+i D$. This ensures that $\mu_\mathrm{flow}(f+i \chi_D) \propto x^{-a_i}$ and $p_t=\frac{x^{-a_t}}{\sum_u x^{-a_u}}$. Similarly, we define $b_i$ as the number of edges in $D$ with value 0 in the flow $g+i \chi_D$. 

We derive some boundary conditions on the $a_i$'s and $b_i$'s. Ranging $i$ over $\Z_q$, every edge of $D$ will get value $0$ in exactly one of $f+i \chi_D$. So $\sum_i a_i$ is the length $|D|$. The same holds for the $b_i$'s, so in particular we find that $\sum_i a_i = \sum_i b_i$.

Second we will bound $\sum_i |a_i-b_i|$. If an edge is counted in $a_i$, but not in $b_i$, it must be an edge of $C$. For every such edge it can happen once that it is counted in $a_i$ and not $b_i$, and once vice versa. Hence the total absolute difference $\sum_i |a_i-b_i|$ is bounded by $2|C \cap D|\leq 2\iota$.

Now the sum of all the probabilities on the diagonal is
\[
\sum_i \min \left( \frac{x^{-a_i}}{\sum_j x^{-a_j}} , \frac{x^{-b_i}}{\sum_j x^{-b_j}} \right),
\]
and the numbers $a_i,b_i$ satisfy the conditions of \cref{lem:sum of minima}, so the sum is bounded below by $1-\frac{1-x^\iota}{1+x^\iota}$.\qedhere

\end{proof}




\begin{proof}[Proof of \cref{lem:sum of minima}]
  First of all, let us introduce a little terminology: an index $i$ is called $b$-minimal if the minimum of the $i$-term in $S$ is not equal to the $a$-term. Also assume that $\sum_j x^{-a_j} \geq \sum_j x^{-b_j}$. And note that the two conditions imply
  \[
    2\iota \geq \sum_i |a_i-b_i| \geq |a_j-b_j| + \left| \sum_{i\neq j} a_i-b_i \right| = |a_j-b_j| + |b_j-a_j|=2|a_j-b_j|.
  \]
Hence the absolute difference between $a_j$ and $b_j$ is always at most $\iota$.

  The proof contains two steps. In the first step, we change the numbers $a_i$ in such a way that the conditions still hold and $S$ does not increase. After the first step there will be at most one $b$-minimal index $i$. This allows us to eliminate the minima from the expression for $S$. In the second step, we give a lower bound for this new obtained expression.

  For the first step, assume that two different indices $t,u$ are $b$-minimal, and assume also that $a_t\geq a_u$. Now we increase $a_t$ by 1, and decrease $a_u$ by 1, i.e.\@ define the new sequence
  \[
    a'_i=\begin{cases}a_t+1 & i=t, \\ a_u-1 & i=u, \\ a_i & \text{otherwise}.\end{cases}
  \]
    First we note that $\sum_j x^{-a_j'} > \sum_j x^{-a_j}$, simply because 
  \[
    x^{-a'_t}-x^{-a_t}=x^{-(a_t+1)}(1-x) > x^{-a_u}(1-x) = x^{-a_u} - x^{-a'_u}.
  \]

  Now we will show for every $i$, that the term $\min(x^{-a_i}/\sum_j x^{-a_j},x^{-b_i}/\sum_j x^{-b_j})$ does not increase. For $i\neq t,u$ this is easy, because $x^{-a_i}$ does not change and the sum in the denominator increases. Hence the first term in the minimum decreases and the minimum cannot increase. We also assumed that both $t,u$ were $b$-minimal, and because we don't change the $b_i$'s, the minimum cannot increase.

  Further, we have to check that the new sequence still satisfies all the conditions. It is clear that $\sum_i a'_i = \sum_i a_i = \sum_i b_i$ and $\sum_j x^{-a_j'} > \sum_j x^{-a_j} \geq \sum_j x^{-b_j}$. Further we see for $i=t,u$ that
  \[
    \frac{x^{-a_i}}{ \sum_{j} x^{-a_j}} > \frac{x^{-b_i}}{ \sum_{j} x^{-b_j}} \geq \frac{x^{-b_i}}{ \sum_{j} x^{-a_j}},
  \]
  hence $a_i > b_i$ for $i=t,u$. Therefore $|a'_t-b_t|= |a_t-b_t +1|= |a_t-b_t|+1$ and $|a'_u-b_u|=|a_u-b_u-1|=|a_u-b_u|-1$ (because $a_u-b_u$ is a positive integer), so the sum of the absolute values remains the same.

  After repeating this adjustment with the same indices, eventually one of them will stop being $b$-minimal. Now repeat with two new $b$-minimal indices, as long as they exist. In the end there must be at most one $b$-minimal index.

  Now we are ready for step two. If there are no $b$-minimal indices, the sum is equal to 1 and the result holds. Hence we assume wlog that 1 is the only $b$-minimal index and we can write
  \[
    S= \frac{x^{-b_1}}{\sum_j x^{-b_j}} + \sum_{i\neq 1} \frac{x^{-a_i}}{\sum_j x^{-a_j}} =\frac{x^{-b_1}}{\sum_j x^{-b_j}} + 1-\frac{x^{-a_1}}{\sum_j x^{-a_j}}.
  \]
  Note that for positive $p,q$, the function $ \frac{-p}{p+q} $ is increasing in $q$ and decreasing in $p$. Because $x^{-a_1} \leq x^{-(b_1+\iota)}$ and $\sum_{j\geq2} x^{-a_j} \geq \sum_{j\geq2} x^{-(b_j-\iota)}$, we can thus estimate that
  \[
    S \geq \frac{x^{-b_1}}{\sum_j x^{-b_j}} + 1 - \frac{x^{-\iota} x^{-b_1}}{x^{-\iota} x^{-b_1}+x^\iota \sum_{j\geq2} x^{-b_j}}.
  \]
  Now write $X=x^{-\iota}$, $B_1=x^{-b_1}$ and $B_2=\sum_{j\geq2} x^{-b_j}$, so that the lower bound for $S$ can be written as $ \frac{B_1}{B_1+B_2} +1- \frac{X^2B_1}{X^2B_1+B_2}$. By AM-GM we can estimate that
\[
  (B_1+B_2)(X^2B_1+B_2)=X^2B_1^2+B_2^2+(X^2+1)B_1B_2 \geq 2XB_1B_2+(X^2+1)B_1B_2 = (X+1)^2B_1B_2,
\]
so that we find:
  \begin{align*}
    S \geq 1+ \frac{B_1}{B_1+B_2} - \frac{X^2B_1}{X^2B_1+B_2}  &= 1+ \frac{B_1(X^2B_1+B_2)-X^2B_1(B_1+B_2)}{(B_1+B_2)(X^2B_1+B_2)} \\
						     &= 1- \frac{(X-1)(X+1)B_1B_2}{(B_1+B_2)(X^2B_1+B_2)} \\
						     &\geq 1- \frac{(X-1)(X+1)B_1B_2}{(X+1)^2B_1B_2} = 1- \frac{X-1}{X+1}. \qedhere
  \end{align*}
\end{proof}

\section{Joint flow-random cluster Markov chain}
\label{se:jointchain}

In this section we will consider a different chain that allows us to sample flows. We will again prove rapid mixing by using path coupling, and this holds for roughly the same range of parameters $x$.

To describe the chain let $q\geq 2$ be an integer and let $G=(V,E)$ be a graph $m$ edges. Let $\mathcal{C}$ be an even generating set  for the flow space $\mathcal{F}_q(G)$ of size $r$ and let $\ell=\ell(\mathcal{C}).$

\begin{definition}
Let $\Omega_{\mathrm{flow-RC}}$ be the set of pairs $(f,F)$ with $F\subset E$ a set of edges and $f$ a flow on $(V,F)$.
The \emph{joint flow-RC Markov chain} is a Markov chain on the state space $\Omega_{\mathrm{flow-RC}}$ depending on two parameters $x,p\in(0,1)$. The transition probabilities are as follows:\\
For $e\in E \setminus F$:
\[
P_{\mathrm{flow-RC}}[(f,F) , (f,F\cup\{e\})] = \frac{(1-p)x}{m}.
\]
For $e\in F$ such that $f(e)=0$:
\[
P_{\mathrm{flow-RC}}[(f,F) , (f,F\setminus\{e\})] = \frac{(1-p)(1-x)}{m}.
\]
And for $t\in\{1,\ldots,q-1\}$, $C\in\mathcal{C}$ an even set such that $C\subseteq F$:
\[
P_{\mathrm{flow-RC}}[(f,F) , (f+t\chi_C,F)] = \frac{p}{qr}.
\]
All other transition probabilities are zero, except for the stationary probabilities $P_{\mathrm{flow-RC}}[(f,F),(f,F)]$.
\end{definition}


Simulating one step of this Markov chain starting in the state $(f,F)$ can be done as follows. We first select either `flow' or `edges' with probabilities resp.\@ $p$ and $1-p$.
\begin{itemize}
    \item If we select `flow', we will update the flow $f$. We choose $C \in \mathcal{C}$ and $t\in\mathbb Z_q$ uniformly at random. If the flow $f + t\chi_C$ is supported on $F$ (for $t\neq 0$ this is equivalent to $C\subseteq F$), we make the transition $(f,F) \to (f+t\chi_C,F)$. Otherwise the chain stays in $(f,F)$.
    \item If we select `edges', we will update the set of edges $F$. We choose an edge $e\in E$ uniformly at random. If $e$ is not contained in $F$, we make with probability $x$ the transition $(f,F)\to(f,F\cup\{e\})$. If $e$ is contained in $F$ and $f(e)=0$, we make with probability $1-x$ the transition $(f,F) \to (f, F\setminus\{e\})$. Otherwise the chain stays in $(f,F)$.
\end{itemize}
The total cost of simulating one step of this Markov chain is $O(\ell)$ for checking whether $C \subseteq F$ in the first case.

Further this Markov chain has stationary distribution $\mu_{\mathrm{flow-RC}} \colon (f,F) \mapsto \frac{1}{Z_\mathrm{flow}} x^{|F|}(1-x)^{|E\setminus F|}$. (From \cref{lem: pf flow=pf RC} it follows that the sum over all states is 1.) This follows easily from checking the detailed balance equation.


\subsection{Rapid mixing of joint flow-RC Markov chain}

\begin{theorem}\label{thm:radid mixing joint}
Let $\ell\geq3, q,s\geq2$ be integers and $1>x>1-\frac{q}{(q-1)\ell s}$.
Write $\xi=x-\left(1-\frac{q}{(q-1)\ell s}\right)$ and let $\delta>0$.
Let $G=(V,E)$ be a graph and $\mathcal{C}$ an even generating set of $\mathcal{F}_q(V,E)$ of size $r$ satisfying $\ell(\mathcal{C}) \leq \ell$ and $s(\mathcal{C}) \leq s$, then there is a value of $p$ for which the joint flow-RC Markov chain for $(G,\mathcal{C})$ comes $\delta$-close to $\mu_{\mathrm{flow-RC}}$ with parameter $x$ in at most $\frac{2(m+r)}{\ell}\log((2m+r)\delta^{-1})\xi^{-1}$ steps.
\end{theorem}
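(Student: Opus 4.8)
The plan is to mimic the proof of \cref{thm:mixing time flow MC}, again applying path coupling (\cref{thm:path coupling}) but now on the joint state space $\Omega_{\mathrm{flow-RC}}$. First I would fix a suitable metric: declare two states $(f,F)$ and $(f',F')$ adjacent (distance $1$) if either $F=F'$ and $f' = f + t\chi_C$ for some $C \in \mathcal{C}$, $t \in \Z_q\setminus\{0\}$, or if $f = f'$ and $F' = F \triangle \{e\}$ for a single edge $e$. The overall diameter $D$ is then $O(m+r)$: one can remove all edges not in $F$, add all edges not in $F'$, and adjust the $r$ flow coefficients, so $D \le 2m+r$. Then it suffices, by path coupling, to build a coupling on adjacent pairs whose expected distance contracts by a factor $1-\alpha$, which gives $\delta$-mixing time $\log(D\delta^{-1})/\alpha$.

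The coupling splits according to the type of the adjacent pair and, within each, the type of move chosen (a `flow' move with probability $p$, an `edge' move with probability $1-p$). For an edge-differing pair $(f,F),(f,F\cup\{e_0\})$: on an edge move at an edge $e \ne e_0$ we use the identity coupling (the transition rules agree since $f$ is the same and $e$'s membership is the same in $F$ and $F\cup\{e_0\}$, the only subtlety being whether removing $e$ is allowed, which depends only on $f(e)$, not on $F$), keeping distance $1$; on an edge move at $e=e_0$ we can couple so the two chains coalesce (one adds $e_0$, the other does nothing, or vice versa), dropping distance to $0$ with probability $(1-p)/m$ times the relevant $x$ or $1-x$ factor — the clean way is to couple the $X$-chain's add/remove of $e_0$ against the $Y$-chain doing nothing, so coalescence happens with probability $\ge (1-p)\min(x,1-x)/m$; on a flow move, the danger is that a set $C$ with $e_0 \in C$ is usable in one chain ($C \subseteq F\cup\{e_0\}$) but not the other ($C \not\subseteq F$) — there are at most $s$ such $C$, and even then we couple to keep distance from increasing by more than $1$, so the distance rises to $2$ with probability at most $p\cdot s(q-1)/(qr)$. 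For a flow-differing pair $(f,F),(f+t\chi_C,F)$ on the same edge set $F$: on a flow move at $D \in \mathcal{C}$ we reuse almost verbatim the three-case analysis (a), (b), (c) from the proof of \cref{thm:mixing time flow MC}, the difference being that we only attempt the move when $D \subseteq F$, and — since the two flows $f$ and $f+t\chi_C$ have supports differing only on edges of $C$ — whether $D\subseteq F$ holds is the same for both chains; on an edge move we must be careful because an edge $e \in C$ may have $f(e)=0$ but $f(e)+t\chi_C(e) \ne 0$ (or vice versa), so removing $e$ is allowed in one chain but not the other — here I would couple identically on all edges where both agree, and on the (few) edges of $C$ where they disagree about removability, couple the removal attempt against doing nothing, which can only leave the distance at $1$ or coalesce; crucially this cannot increase the distance. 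Assembling: the expected distance is at most $1 - (\text{coalescence prob from edge move on differing pairs}) + (\text{blow-up prob from flow move on edge-differing pairs})$, and one balances $p$ so that these combine into a positive $\alpha$.

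Concretely, I expect $\alpha$ to come out of the form
\[
\alpha \;=\; \frac{1}{2m+r}\Bigl( (1-p)\tfrac{\min(x,1-x)}{?} \cdot(\cdots) \;-\; p\cdot\tfrac{s(q-1)}{qr}(\cdots) \;+\; (\text{flow contraction term}) \Bigr),
\]
and the right choice of $p$ (of order $m/(m+r)$ or $r/(m+r)$, tuned so the two competing linear-in-$p$ terms balance) yields $\alpha \gtrsim \tfrac{\ell}{(m+r)}\,\xi$ up to constants, matching the claimed mixing time $\frac{2(m+r)}{\ell}\log((2m+r)\delta^{-1})\xi^{-1}$. The role of the hypothesis $x > 1 - \frac{q}{(q-1)\ell s}$ is exactly to make the flow-contraction gain (governed, as in \cref{lem:sum of minima}, by a factor like $\frac{1-x^{\iota}}{1+x^{\iota}}$, but here with $|C\cap D|$ replaced by a quantity controlled through $\ell$ and $s$) outweigh the $s(q-1)$-sized loss from flow moves on edge-differing pairs.

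The main obstacle, I expect, is the bookkeeping in case (c)-type flow moves on edge-differing pairs and, symmetrically, edge moves on flow-differing pairs: one has to verify that in every sub-case the coupling can be chosen so the distance never jumps by more than $1$, and to get a clean lower bound on the coalescence probability and a clean upper bound on the blow-up probability, all in terms of $\ell$, $s$, $q$, $x$ only. A secondary subtlety is the optimization over $p$ to land precisely on the stated constants; this is a single-variable calculus exercise once the two one-sided estimates are in hand, but it is where the factors of $\ell$ versus $\ell s$ in the denominator of the temperature threshold must be tracked carefully. Once $\alpha \ge \tfrac{\ell}{2(m+r)}\xi$ is established, \cref{thm:path coupling} with $D \le 2m+r$ finishes the proof, and combined with \cref{prop:flow sampler to RC/Potts} this yields \cref{thm:main sampling}(ii).
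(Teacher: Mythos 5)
Your overall frame (path coupling on the joint space with the same adjacency structure, diameter at most $2m+r$, and a final balancing of $p$) matches the paper, but two quantitative points in your coupling are wrong, and the second is exactly where the hypothesis on $x$ must enter. First, for an edge-differing pair $(f,F),(f,F\cup\{e_0\})$, the correct coupling at the edge move on $e_0$ pairs ``$X$ adds $e_0$'' (probability $x$) with ``$Y$ holds'', and ``$X$ holds'' with ``$Y$ removes $e_0$'' (probability $1-x$, legal since $f(e_0)=0$); \emph{both} branches coalesce, so the coalescence probability is the full $\frac{1-p}{m}$, independent of $x$. Your ``clean'' bound $(1-p)\min(x,1-x)/m$ is not just lossy, it kills the argument: in the regime $x>1-\frac{q}{(q-1)\ell s}$ the blow-up from flow moves, $\frac{(q-1)sp}{qr}$, does not shrink as $x\to 1$, and requiring it to be beaten by $(1-p)(1-x)/m$ while simultaneously requiring contraction on flow-differing pairs forces $q>(q-1)\ell s$, which fails for every admissible $\ell\geq 3$, $s\geq 2$, $q\geq 2$; no choice of $p$ would work with your estimate.

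Second, your treatment of flow-differing pairs $(f,F),(f+t\chi_C,F)$ has the roles of the two move types reversed. In the joint chain the flow update is a uniform proposal ($C'$ and $t'$ uniform, accepted iff $C'\subseteq F$), not a heat-bath update weighted by $\mu_{\mathrm{flow}}$, so there is no analogue of case (c) of the proof of \cref{thm:mixing time flow MC}, \cref{lem:sum of minima} is never used, and no intersection parameter such as $\iota$ can appear (consistent with the theorem, whose hypotheses involve only $\ell$ and $s$): for $C'=C$ both chains can be sent to $(f+t'\chi_C,F)$, giving coalescence probability exactly $\frac{p}{r}$, and for $C'\neq C$ the identity coupling keeps the distance at $1$ because the acceptance condition $C'\subseteq F$ is the same in both chains. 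The dependence on $x$ comes instead from the edge moves you dismissed as harmless: for an edge $e\in C$ with exactly one of $f(e)$, $f(e)+t\chi_C(e)$ equal to $0$, one chain removes $e$ and the other cannot, and the resulting pair $(f,F\setminus\{e\})$ versus $(f+t\chi_C,F)$ is at distance $2$, so your claim that this ``cannot increase the distance'' is false. There are at most $\ell$ such edges, each contributing $\frac{(1-p)(1-x)}{m}$, which yields the loss term $\frac{\ell(1-x)(1-p)}{m}$; balancing $\frac{1-p}{m}$ against $\frac{(q-1)sp}{qr}$ and $\frac{p}{r}$ against $\frac{\ell(1-x)(1-p)}{m}$ is precisely what produces the threshold $1-x<\frac{q}{(q-1)\ell s}$ and, after eliminating $p$, the bound $\alpha^{-1}\leq \frac{2(m+r)}{\ell}\xi^{-1}$ needed for the stated mixing time. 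As written, your accounting would locate the $x$-dependence in the wrong place and could not reproduce the theorem's parameter range.
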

\begin{remark}
An exact value for $p$ in the theorem above can be obtained from equation \eqref{eq:determine p} below.
\end{remark}
\begin{remark}
Note again that $\xi > \frac{q}{(q-1)\ell s} > \frac{1}{\ell s}$, and hence the required number of calls in the above theorem is at least $2s(m+r) \log((2m+r)\delta^{-1})$. Again this means the bound does not get better with larger $\ell$, even though it appears in the denominator, and even gets worse with larger $s$.
\end{remark}

It would be interesting to see if the theorem could be used to say anything about possible rapid mixing of the Glauber dynamics for the random cluster model at low temperatures cf.~\cite{GJ18}.

The following corollary is immediate by Proposition~\ref{prop:flow sampler to RC/Potts} and directly implies Theorem~\ref{thm:main sampling}(ii) by \cref{lem:generating set for all}.
\begin{cor}
Fix integers $\ell\geq 3$ and $q,s\geq 2$.
Let $w>(q-1)(\ell s-1)$ and $\delta>0$, then there exists an algorithm that on input an $m$-edge graph $G$ and an even generating set $\mathcal{C}$ for $\mathcal{F}_q(G)$ of size $r$ satisfying $\ell(\mathcal{C}) \leq \ell$ and $s(\mathcal{C})\leq s$, outputs a $q$-state Potts colouring $\sigma:V\to [ q ] $ within total variation distance $\delta$ of the $q$-state Potts-measure $\mu_\mathrm{Potts}$ with parameter $w$. This is obtained  by running the joint flow-RC Markov chain for $O((m+r)\log((m+r)\delta^{-1}))$ steps, where each step takes $O(1)$ time (since $\ell$ is fixed).
\end{cor}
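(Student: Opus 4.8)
The plan is to translate the Potts sampling problem at parameter $w$ into a flow sampling problem at a suitable parameter $x\in(0,1)$, to produce the flow sample via the joint flow-RC Markov chain of \cref{thm:radid mixing joint}, and finally to push this sample through the couplings recorded in \cref{prop:flow sampler to RC/Potts}. First I would fix
\[
x\coloneqq\frac{w-1}{w+q-1}.
\]
Since $q\geq2$, $\ell\geq3$, $s\geq2$ force $(q-1)(\ell s-1)\geq5>1$, the hypothesis $w>(q-1)(\ell s-1)$ gives $w>1$, so $x\in(0,1)$; a direct computation yields $1-x=\tfrac{q}{w+q-1}$ and $\tfrac{1+(q-1)x}{1-x}=w$. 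Writing $x=1-\tfrac{q}{w+q-1}$, the inequality $x>1-\tfrac{q}{(q-1)\ell s}$ is seen to be equivalent to $w+q-1>(q-1)\ell s$, i.e.\ to $w>(q-1)(\ell s-1)$. Thus the hypothesis on $w$ is exactly the hypothesis on $x$ needed to invoke \cref{thm:radid mixing joint}, and $\xi\coloneqq x-\bigl(1-\tfrac{q}{(q-1)\ell s}\bigr)>0$ depends only on the fixed parameters $q,\ell,s,w$. By \cref{prop:flow sampler to RC/Potts} (with this $x$) it then suffices to exhibit a $\delta$-approximate sampler for $\mu_{\mathrm{flow}}$ with parameters $q$ and $x$ running in time $O((m+r)\log((m+r)\delta^{-1}))$; the two conversion steps of that proposition add only $O(m)$, which is dominated by this bound.

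Next I would run the joint flow-RC Markov chain for $(G,\mathcal{C})$ with parameter $x$ and with the value of $p$ prescribed by \cref{thm:radid mixing joint}, started from an arbitrary state, say $(0,\emptyset)$. After
\[
T\coloneqq\Bigl\lceil\tfrac{2(m+r)}{\ell}\log\bigl((2m+r)\delta^{-1}\bigr)\,\xi^{-1}\Bigr\rceil
\]
steps, \cref{thm:radid mixing joint} guarantees that the resulting state $(f,F)$ has distribution within total variation distance $\delta$ of $\mu_{\mathrm{flow-RC}}$. Since $\ell$ and $\xi$ are constants, $T=O((m+r)\log((m+r)\delta^{-1}))$, and, as observed after the definition of the chain, each step costs $O(\ell)=O(1)$ time.

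Finally I would output the flow coordinate $f$. The marginal of $\mu_{\mathrm{flow-RC}}$ on the first coordinate is, for a fixed flow $f$,
\[
\sum_{F\supseteq\supp(f)}\frac{x^{|F|}(1-x)^{|E\setminus F|}}{Z_{\mathrm{flow}}}
=\frac{x^{|\supp(f)|}}{Z_{\mathrm{flow}}}
=\mu_{\mathrm{flow}}(f),
\]
where the first equality uses $\sum_{F'\subseteq S}x^{|F'|}(1-x)^{|S|-|F'|}=1$ with $S=E\setminus\supp(f)$. Since $(f,F)\mapsto f$ is deterministic, the law of $f$ is within total variation distance $\delta$ of $\mu_{\mathrm{flow}}$. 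Feeding this $\delta$-approximate flow sampler into \cref{prop:flow sampler to RC/Potts} (first selecting each $e\in E\setminus\supp(f)$ independently with probability $x$ to obtain a random cluster configuration, then colouring each component of $(V,F)$ with an independent uniform colour from $[q]$) produces a colouring $\sigma:V\to[q]$ within total variation distance $\delta$ of $\mu_{\mathrm{Potts}}$ with parameter $\tfrac{1+(q-1)x}{1-x}=w$, at an extra cost of $O(m)$. The overall running time is dominated by the $T$ Markov chain steps, i.e.\ $O((m+r)\log((m+r)\delta^{-1}))$ steps of $O(1)$ time each, as claimed.

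The only genuine obstacle is \cref{thm:radid mixing joint} itself, which we are allowed to assume; beyond that the argument is bookkeeping, and the point requiring the most care is verifying that the constants absorbed into the $O(\cdot)$ (through $\ell$, $\xi$, and $p$) depend only on the fixed parameters $q,\ell,s,w$ and not on $m$, $r$, or $\delta$, and that no factor of the total variation distance is lost along the chain of reductions $\mu_{\mathrm{flow-RC}}\to\mu_{\mathrm{flow}}\to\mu_{\mathrm{RC}}\to\mu_{\mathrm{Potts}}$.
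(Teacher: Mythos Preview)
Your proof is correct and follows essentially the same approach the paper indicates: the paper states the corollary is ``immediate by Proposition~\ref{prop:flow sampler to RC/Potts}'' applied to the output of \cref{thm:radid mixing joint}, and you have simply filled in the details—the change of variables $x=(w-1)/(w+q-1)$, the equivalence of the hypotheses on $w$ and $x$, the computation of the flow marginal of $\mu_{\mathrm{flow-RC}}$, and the running-time bookkeeping. One minor remark: since the $F$-marginal of $\mu_{\mathrm{flow-RC}}$ is already $\mu_{\mathrm{RC}}$, you could skip the re-sampling of $F$ from $f$ and go straight to the Edwards--Sokal colouring, but your route through the flow marginal and \cref{prop:flow sampler to RC/Potts} is equally valid and is what the paper points to.
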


\begin{proof}[Proof of \cref{thm:radid mixing joint}]

We will again use path coupling to deduce rapid mixing of the above defined Markov chain. The distance we use on the state space is defined as the least number of steps required in the Markov chain to go from one state to the another. 
A crude upper bound on the diameter is given by $2m+r$.
There are two kinds of pairs of states at distance one, which we will treat separately. Just as in the proof of \cref{thm:mixing time flow MC}, we will prove that the expected distance after one step of the coupling is at most $1-\alpha$ for some $\alpha$, and therefore the mixing time is at most $\log((2m+r)\delta^{-1})\alpha^{-1}$.

Consider the states $(f,F)$ and $(f,F \cup \{e\})$. We will make a coupling on them. The transition probabilities of this coupling are as follows:
%
\[
\begin{pmatrix*}[l] f & F\\ f  & F\cup\{e\} \end{pmatrix*} \to
\begin{cases}
\begin{pmatrix*}[l] f & F\cup\{e\} \\ f & F\cup\{e\}\end{pmatrix*} & \frac{(1-p)x}{m},\\
\begin{pmatrix*} f & F \\ f & F \end{pmatrix*} & \frac{(1-p)(1-x)}{m},\\
\begin{pmatrix*}[l] f & F\cup\{e'\} \\ f &  F\cup\{e,e'\} \end{pmatrix*} & \frac{(1-p)x}{m} \text{ if $e'\not\in F\cup\{e\}$}, \\
\begin{pmatrix*}[l] f & F\setminus\{e'\} \\ f & F\setminus\{e'\} \cup \{e\} \end{pmatrix*} & \frac{(1-p)(1-x)}{m} \text{ if $e'\in F$ and $f(e')=0$},\\

\begin{pmatrix*}[l] f+t\chi_C & F \\ f+t\chi_C & F\cup\{e\} \end{pmatrix*} & \frac{p}{qr} \text{ if $t\neq0$ and $C\subseteq F$},\\
\begin{pmatrix*}[l] f & F \\ f+t\chi_C & F\cup\{e\} \end{pmatrix*} & \frac{p}{qr} \text{ if $t\neq0$, $e\in C$ and $C\subseteq F\cup\{e\}$}.
\end{cases}
\]
The first two cases each occur exactly once and decrease the distance by one. The last case occurs at most $s(q-1)$ times and increases the distance by one. Therefore the expected distance after one step of the coupling is at most
\[
1-\frac{1-p}{m}+\frac{(q-1)sp}{qr}
\]
in this case.

Next is the coupling on the neighbouring states $(f,F)$ and $(f+t\chi_C,F)$ (with $t \not= 0$). The transition probabilities are as follows:
\[
\begin{pmatrix*}[l]f & F \\ f+t\chi_C & F \end{pmatrix*} \to
\begin{cases}
\begin{pmatrix*}[l]f & F\cup\{e\} \\ f+t\chi_C & F\cup\{e\} \end{pmatrix*} & \frac{(1-p)x}{m} \text{ if $e\not\in F$},\\
\begin{pmatrix*}[l]f & F\setminus\{e\} \\ f+t\chi_C & F\setminus\{e\} \end{pmatrix*} & \frac{(1-p)(1-x)}{m} \text{ if $e\not\in F$ and $e\not\in C$},\\
\begin{pmatrix*}[l]f & F \setminus \{e\}\\ f+t\chi_C & F \end{pmatrix*} & \frac{(1-p)(1-x)}{m} \text{ if $e\in C$ and $f(e)=0$},\\
\begin{pmatrix*}[l]f & F\\ f+t\chi_C & F\setminus\{e\} \end{pmatrix*} & \frac{(1-p)(1-x)}{m} \text{ if $e\in C$ and $f(e)+t\chi_C(e)=0$},\\
\begin{pmatrix*}[l]f+t'\chi_C & F \\f+t'\chi_C & F \end{pmatrix*} & \frac{p}{qr},\\
\begin{pmatrix*}[l]f+t'\chi_{C'} & F \\ f+t\chi_C+t'\chi_{C'} & F\end{pmatrix*} & \frac{p}{qr} \text{ if $t'\neq 0$ and $C'\neq C$}.
\end{cases}
\]
The third and fourth case occur together at most $\ell$ times and increase the distance with one. The fifth case occurs exactly $q$ times and decreases the distance with one. Therefore the expected distance after one step of the coupling is at most
\[
1-\frac{p}{r}+\frac{\ell(1-x)(1-p)}{m}.
\]

To find a useful coupling, both expected distances will have to be smaller than one and we have to solve the following equations (for $p$ and $\alpha$):
\begin{align}
1-\frac{1-p}{m}+\frac{(q-1)sp}{qr} &= 1-\frac{p}{r}+\frac{\ell(1-x)(1-p)}{m} = 1-\alpha, \nonumber
\\
\text{i.e. }\:\frac{1-p}{m}-\frac{(q-1)sp}{qr} &= \frac{p}{r}-\frac{\ell(1-x)(1-p)}{m} = \alpha. \label{eq:determine p}
\end{align}
For $p=0$, the first term is positive while the second is negative, and vice versa for $p=1$. Therefore the solution for $p$ lies indeed in $(0,1)$ and we will not calculate it explicitly. Instead we eliminate $p$ to only calculate the value of $\alpha$:
\begin{align*}
    &\frac{1}{qrm}\left( qr + (q-1)sm + qm + qr\ell(1-x) \right) \alpha \\
    &= \left( \frac{1}{m} + \frac{(q-1)s}{qr} \right) \left(\frac{1}{r}p+\frac{\ell(1-x)}{m}p - \frac{\ell(1-x)}{m}\right) + \left( \frac{1}{r} + \frac{\ell(1-x)}{m} \right) \left(\frac{1}{m}-\frac{1}{m}p-\frac{(q-1)s}{qr}p \right)\\
    &= -\frac{(q-1)\ell s(1-x)}{qrm} + \frac{1}{rm},
\end{align*}
reducing to
\begin{equation*} 
 \alpha = \frac{q-(q-1)\ell s(1-x)}{qr+(q-1)sm + qm + qr\ell(1-x)}.   
\end{equation*}
Since $x>1-\frac{q}{(q-1)\ell s}$, this value of $\alpha$ is positive. Plugging in $1-x=\frac{q}{(q-1)\ell s}-\xi$, we continue to find a bound on $\alpha^{-1}$:
\begin{align*}
    \alpha^{-1} &= \frac{qr+(q-1)sm + qm + qr\ell(1-x)}{q-(q-1)\ell s(1-x)} = \frac{qr+(q-1)sm + qm + qr\ell(1-x)}{(q-1)\ell s\xi}\\
    &< \frac{qr+(q-1)sm + qm + \frac{q^2r}{(q-1)s}}{(q-1)\ell s\xi} \leq \frac{2(m+r)}{\ell}\xi^{-1}. 
\end{align*}
This finishes the proof.
\end{proof}

\section{Computing the partition function using the Markov chain sampler}
\label{se:count}
In this section we prove Theorem~\ref{thm:main counting}.
We will do this with a self-reducibility argument, making use of a connection between removing and contracting edges.

We have the following result.
\begin{prop}\label{prop:sampling implies counting}
Let $x\in [1/3,1]$ and let $q\in \mathbb{N}_{\geq 2}$. 
Let $\mathcal{G}$ be a family of graphs which is closed under contracting edges.
Assume we are given an algorithm that for $n$-vertex and $m$-edge graph $G\in \mathcal{G}$ and any $\delta>0$ computes a random $\mathbb{Z}_q$-flow with distribution $\delta$-close to $\mu_{\mathrm{flow}}$ in time bounded by $T(\delta,n,m)$.
Then there is an algorithm that given an $n$-vertex and $m$-edge graph $G\in \mathcal{G}$ and any $\epsilon>0$ computes a number $\zeta$ such that with probability at least $3/4$ 
\[
e^{-\epsilon}\leq \frac{\zeta}{Z_{\mathrm{flow}}(G;q,x)}\leq e^{\epsilon}
\]
in time $O(n^2\epsilon^{-2}T( \epsilon/n,n,m))$.
\end{prop}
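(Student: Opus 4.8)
The standard approach is telescoping via self-reducibility. The key identity is a deletion--contraction relation for the flow partition function: fixing a non-loop edge $e$, every $\mathbb{Z}_q$-flow on $G$ either assigns $0$ to $e$ (such flows are exactly the flows on $G\setminus e$ that can be completed, which correspond bijectively to flows on... more carefully, flows on $G$ with $f(e)=0$ biject with flows on $G\setminus e$) or assigns a nonzero value to $e$ (these biject with flows on $G/e$, split into $q-1$ nonzero classes). This gives
\[
Z_{\mathrm{flow}}(G;q,x) = Z_{\mathrm{flow}}(G\setminus e;q,x) + (q-1)\, x\, Z_{\mathrm{flow}}(G/e;q,x),
\]
and more usefully, observing that $Z_{\mathrm{flow}}(G\setminus e;q,x)=Z_{\mathrm{flow}}(G;q,x)\cdot \mu_{\mathrm{flow}}(\{f: f(e)=0\})$ is not quite right since $G\setminus e$ has one fewer edge; instead I would use the ratio
\[
\frac{Z_{\mathrm{flow}}(G/e;q,x)}{Z_{\mathrm{flow}}(G;q,x)} = \frac{1}{(q-1)x}\cdot \Pr_{f\sim\mu_{\mathrm{flow}}(G)}[f(e)\neq 0],
\]
or an analogous exact relation between consecutive graphs in the sequence. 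I would fix an ordering of the edges and contract them one at a time (skipping loops, whose contribution to $Z_{\mathrm{flow}}$ is a clean factor), producing a chain $G=G_0, G_1,\dots, G_k$ with $G_k$ having no non-loop edges, so $Z_{\mathrm{flow}}(G_k;q,x)$ is computed exactly. Then $Z_{\mathrm{flow}}(G;q,x) = Z_{\mathrm{flow}}(G_k;q,x)\prod_{i} \rho_i$ where each ratio $\rho_i = Z_{\mathrm{flow}}(G_i;q,x)/Z_{\mathrm{flow}}(G_{i+1};q,x)$ is of the form $(q-1)x / \Pr_{f\sim\mu_{\mathrm{flow}}(G_i)}[f(e_i)\neq 0]$ (up to the loop bookkeeping), and $k = O(n)$ since each contraction drops the vertex count (loop removals don't, but there are $O(m)$ of them and they're exact — actually one must be a little careful, but the number of \emph{estimated} ratios is $O(n)$ because only contractions of non-loop edges require sampling; loops contribute exactly).

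\textbf{Estimating each ratio.} For each $i$, I would draw $N$ independent samples from a $(\delta)$-approximate sampler for $\mu_{\mathrm{flow}}(G_i)$ with $\delta = \epsilon/n$ (here we need $G_i \in \mathcal{G}$, which holds since $\mathcal{G}$ is closed under contraction, and Lemma~\ref{lem:contraction is good} guarantees the generating-set parameters are preserved so the sampler's hypotheses still apply), and use the empirical frequency of $\{f(e_i)\neq 0\}$ as an estimator $\hat p_i$ for $p_i := \Pr[f(e_i)\neq 0]$. The crucial point is that $p_i$ is bounded away from $0$: since $x\geq 1/3$, a union/averaging argument (or a direct comparison: the map $f\mapsto f+t\chi_C$ for $C$ a generating even subgraph through $e_i$... actually simplest is the deletion--contraction identity itself, which gives $p_i = (q-1)x\, Z_{\mathrm{flow}}(G_i/e_i)/Z_{\mathrm{flow}}(G_i) $ and one bounds $Z_{\mathrm{flow}}(G_i/e_i)/Z_{\mathrm{flow}}(G_i)\geq$ some constant, e.g.\ $\geq 1/q$ by a surjection-type argument since contracting can only merge flow classes) shows $p_i \geq c$ for an absolute constant $c>0$. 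With $p_i$ bounded below, $N = O(n\epsilon^{-2})$ samples suffice to get $\hat p_i$ within a multiplicative $(1\pm \epsilon/(10 n))$ factor of $p_i$ with failure probability $O(1/n)$, by a Chernoff bound; the $O(\epsilon/n)$ sampler bias shifts $p_i$ by at most $\epsilon/n$, which is also absorbed. A union bound over the $O(n)$ ratios gives overall success probability $\geq 3/4$, and the product of $O(n)$ factors each within $e^{\pm\epsilon/(Cn)}$ of the truth lies within $e^{\pm\epsilon}$. The running time is $O(n)$ ratios $\times\ O(n\epsilon^{-2})$ samples $\times\ T(\epsilon/n,n,m)$ per sample, i.e.\ $O(n^2\epsilon^{-2}T(\epsilon/n,n,m))$ as claimed.

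\textbf{Main obstacle.} The genuinely delicate step is the lower bound on $p_i = \Pr_{f\sim\mu_{\mathrm{flow}}(G_i)}[f(e_i)\neq 0]$ by an absolute constant, using only $x \in [1/3,1]$ — without this, the variance of the empirical estimator is not controlled and the sample count blows up. I expect one proves it through the deletion--contraction identity combined with an elementary inequality $Z_{\mathrm{flow}}(G/e) \geq \tfrac1q Z_{\mathrm{flow}}(G)$ (or similar), which holds because the natural map from flows on $G$ to flows on $G/e$ is surjective with fibers of size $1$ or $q$; plugging into $p_i = (q-1)x\,Z_{\mathrm{flow}}(G_i/e_i)/Z_{\mathrm{flow}}(G_i)$ and using $x\geq 1/3$, $q\geq 2$ gives $p_i \geq (q-1)/(3q) \geq 1/6$. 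A secondary nuisance is the careful bookkeeping of loops and of the correspondence between flows with $f(e)=0$ and flows on $G\setminus e$ versus flows on $G/e$; this is routine but must be done precisely so that every ratio in the telescoping product is either computed exactly (loops) or estimated with controlled relative error (non-loop contractions), and so that each intermediate graph genuinely lies in $\mathcal{G}$ with the right parameters via Lemma~\ref{lem:contraction is good}.
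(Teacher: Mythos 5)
Your overall strategy (telescoping over a sequence of edge contractions, estimating each ratio by sampling, using closure of $\mathcal{G}$ under contraction) is the paper's strategy, but your key identity is wrong, and the error propagates into the step you yourself flag as the main obstacle. For a non-loop edge $e$, the flows on $G$ are in bijection with the flows on $G/e$ (the value on $e$ is \emph{determined} by the rest, not free), so the correct relation is $Z_{\mathrm{flow}}(G;q,x)=(1-x)Z_{\mathrm{flow}}(G\setminus e;q,x)+xZ_{\mathrm{flow}}(G/e;q,x)$, not $Z_{\mathrm{flow}}(G)=Z_{\mathrm{flow}}(G\setminus e)+(q-1)xZ_{\mathrm{flow}}(G/e)$; already a single non-loop edge or a digon refutes your version. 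Consequently your ratio formula $Z_{\mathrm{flow}}(G/e)/Z_{\mathrm{flow}}(G)=\tfrac{1}{(q-1)x}\Pr[f(e)\neq 0]$ is false; the correct statement is
\[
\frac{Z_{\mathrm{flow}}(G/e;q,x)}{Z_{\mathrm{flow}}(G;q,x)}=\frac1x-\frac{1-x}{x}\Pr_{f\sim\mu_{\mathrm{flow}}}[f(e)=0]=1+\frac{1-x}{x}\Pr_{f\sim\mu_{\mathrm{flow}}}[f(e)\neq 0],
\]
which always lies in $[1,1/x]\subseteq[1,3]$ when $x\geq 1/3$. This kills your "main obstacle" resolution in two ways: the constant lower bound $\Pr[f(e)\neq 0]\geq c$ is not only derived from the false identity, it is simply untrue (on a long cycle with $x\in[1/3,1)$ this probability is $\approx(q-1)x^{|E|}$, exponentially small, while $Z(G/e)/Z(G)\approx 1$), and with the correct identity no such bound is needed, because an \emph{additive} estimate of $\Pr[f(e)=0]$ gives multiplicative control of a quantity that is at least $1$. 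This is exactly how the paper's proof proceeds (its estimator is $Y_j=\tfrac1x-\tfrac{1-x}{x}X_j$ with $X_j$ the indicator of $f_j(e)=0$).

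There is also a quantitative slip in your error budget. Demanding each of the $O(n)$ ratios to within a multiplicative $(1\pm\epsilon/(10n))$ with failure probability $O(1/n)$ costs, by Chernoff, on the order of $n^2\epsilon^{-2}\log n$ samples \emph{per ratio}, which blows past the claimed total of $O(n^2\epsilon^{-2})$ sampler calls; $O(n\epsilon^{-2})$ samples per ratio only give accuracy about $\epsilon/\sqrt n$ per factor. The paper's fix is the standard one: with $M=O(\epsilon^{-2}t)$ samples per ratio each factor has relative variance $O(\epsilon^2/t)$, independence makes the relative variance of the product $O(\epsilon^2)$, and Chebyshev yields the required constant success probability $3/4$ (which is all the proposition asks for), with the $\delta=O(\epsilon/t)$ sampler bias absorbed as a small multiplicative shift. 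With the corrected identity and this product-variance argument your outline becomes the paper's proof; as written, both the identity and the concentration accounting are genuine gaps.
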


Before proving the proposition, let us show how it implies Theorem~\ref{thm:main counting}.
\begin{proof}[Proof of Theorem~\ref{thm:main counting}]
We prove part (i):  part (ii) follows in exactly the same way.
Fix positive integers $\iota$ and $d$ with $d$ at least $2$.
Consider the class of graphs $\mathcal{G}$ that have a basis for the cycle space $\mathcal{C}$ consisting of even sets satisfying $\iota(\mathcal{C})\leq \iota$ and $d(\mathcal{C})\leq d.$
By Lemma~\ref{lem:contraction is good} this class is closed under contracting edges.
By Theorem~\ref{thm:mixing time flow MC} we have an algorithm that for each $m$-edge graph $G\in \mathcal{G}$ and any $\delta>0$ computes a random $\mathbb{Z}_q$-flow with distribution within total variation distance $\delta$ from $\mu_{\mathrm{flow}}$ in time bounded by $T(\delta,n,m) =O(m^2 \log(m \delta^{-1}))$ provided $x>1-\frac{2}{(d+1)\iota}\geq 1/3$; see Remark~\ref{re:runtime}). 
The theorem now follows from the previous proposition combined with the fact that $Z_{\mathrm{flow}}(G;q,x)=(1-x)^{|E|}q^{-|V|}Z_{\mathrm{Potts}}(G;q,\tfrac{1+(q-1)x}{1-x})$ by Lemma~\ref{lem:tutte}. 
The running time is given by $O(n^2m^2 \epsilon^{-2} \log(nm \epsilon^{-1}))$.
\end{proof}

We now turn to the proof of Proposition~\ref{prop:sampling implies counting}.
\begin{proof}[Proof of Proposition~\ref{prop:sampling implies counting}]
As already mentioned above the proof relies on a self-reducibility argument.

The flow partition function satisfies the following well known deletion-contraction relation: for a graph $G=(V,E)$ and $e\in E$ not a loop, we have
\begin{equation}\label{eq:deletion contraction}
    Z_{\mathrm{flow}}(G;q,x)=(1-x)Z_{\mathrm{flow}}(G\setminus e;q,x) + xZ_{\mathrm{flow}}(G/e;q,x).  
\end{equation}
This holds because the collection of all flows on $G$ and on $G/e$ are in bijection with each other, while the flows on $G\setminus e$ correspond to the flows on $G$ that take value $0$ on $e$.

We rewrite~\eqref{eq:deletion contraction} as
\begin{equation}\label{eq:ratio flow}
\frac{Z_{\mathrm{flow}}(G/e;q,x)}{Z_{\mathrm{flow}}(G;q,x)}=\frac{1}{x}-\frac{1-x}{x}\cdot\frac{{Z_{\mathrm{flow}}(G\setminus e;q,x)}}{Z_{\mathrm{flow}}(G;q,x)},
\end{equation}
 and we interpret the fraction 
\[
\frac{{Z_{\mathrm{flow}}(G\setminus e;q,x)}}{Z_{\mathrm{flow}}(G;q,x)}
\]
as the probability that $e$ is assigned the value $0\in \mathbb{Z}_q$ when a flow is sampled from $\mu_{\mathrm{flow}}$. 
This probability can be estimated using the assumed sampler. Hence we can use the sampler to estimate \eqref{eq:ratio flow}.

From $G=(V,E)$. we now construct a series of graphs $G=G_0,G_1,\ldots G_t$ where in each step we contract one edge (which is not a loop). We can do this, until every component has been contracted to a single vertex, possibly with some loops attached to it. 
This takes $t=|V|-c(G)\leq |V|$ steps, where $c(G)$ denotes the number of components of $G$. 
In the end we have $|E|-|V|+c(G)\leq |E|$ edges (loops) left and 
the resulting graph $G_t$ thus has flow partition function  $Z_{\mathrm{flow}}(G_t;q,x)=(1+(q-1)w)^{|E|-|V|+c(G)}$.
Therefore
\begin{equation}\label{eq:telescope}
  \frac{(1+(q-1)x)^{|V|-c(G)}}{Z_{\mathrm{flow}}(G;q,x)} =\frac{Z_{\mathrm{flow}}(G_{r};q,x)}{Z_{\mathrm{flow}}(G_0;q,x)}= \frac{Z_{\mathrm{flow}}(G_1;q,x)}{Z_{\mathrm{flow}}(G_0;q,x)}\cdots \frac{Z_{\mathrm{flow}}(G_{t};q,x)}{Z_{\mathrm{flow}}(G_{t-1};q,x)}.   
\end{equation}

Note that for each $i$ and any non-loop edge $e\in E(G_i)$ we have by~\eqref{eq:ratio flow},
\begin{equation}\label{eq:bound prob e notin}
1\leq \frac{Z_{\mathrm{flow}}(G_i/ e;q,x)}{Z_{\mathrm{flow}}(G_i;q,x)}\leq 1/x\leq 3,
\end{equation}
since $x\geq1/3$. 

We can now estimate each individual probability on the right-hand side of \eqref{eq:telescope} to get an estimate for $Z_{\mathrm{flow}}(G;q,w)$. 
This is rather standard and can be done following the approach in~\cite{Jbook} for matchings. We therefore only give a sketch of the argument, leaving out technical details.

For each $i$, let
\[
p_i\coloneqq\frac{Z_{\mathrm{flow}}(G_i\setminus e;q,x)}{Z_{\mathrm{flow}}(G_i;q,x)}.
\]
To estimate $p_i$ we run our sampler $M=O(\epsilon^{-2}t)$ times with $\delta=O(\epsilon/t)$ to generate independent random flows $f_j$ $(j=1,\ldots,M)$. Denote by $X_j$ the random variable that is equal to $1$ if $e$ is not contained in $\supp(f_j)$ and $0$ otherwise.
We are in fact not interested in $p_i$, but rather in
\[
\hat{p}_i\coloneqq\frac{Z_{\mathrm{flow}}(G_i/ e;q,x)}{Z_{\mathrm{flow}}(G_i;q,x)}=\frac{1}{x}-\frac{1-x}{x}p_i.
\]
We therefore define the random variable $Y_j\coloneqq\frac{1}{x}-\frac{1-x}{x}X_j$ and $Y^i\coloneqq1/M\sum_{j=1}^M Y_j$.
Note that $\mathbb{E}[Y^i]=\mathbb{E}[Y_j] = \hat{p}_i$ and it is easy to check that $\text{Var}[Y^i]=1/M\text{Var}[Y_j]=1/M(\mathbb{E}(Y_j)-1)(1/x-\mathbb{E}(Y_j))$ for any $j=1,\ldots,M$.
We note that, by definition of the total variation distance, the fact that $x\geq 1/3$, and \eqref{eq:bound prob e notin}, we have 
\begin{equation}\label{eq:delta bound}
\hat{p}_i(1-2\delta)\leq \hat{p}_i-\frac{1-x}{x}\delta\leq \mathbb{E}[Y^i] = \mathbb{E}[Y_j] \leq \hat{p}_i+\frac{1-x}{x}\delta\leq (1+2\delta)\hat{p}_i.
\end{equation}
This implies that 
\[
\frac{\text{Var}[Y^i]}{\mathbb{E}[Y^i]^2}=\frac{1}{M}
\frac{(\mathbb{E}(Y_j)-1)(1/x-\mathbb{E}(Y_j))}{\mathbb{E}[Y^i]^2}\leq O(\epsilon^2/t).
\]

Consider next the random variable $Y\coloneqq\prod_{i=1}^t Y^i$. This will, up to a multiplicative factor (cf.\eqref{eq:telescope}), give us the desired estimate. 
Since the $Y^i$ are independent we have
\begin{align*}
\frac{\text{Var}[Y]}{\prod_{i=1}^t \mathbb{E}[Y^i]^2}=\prod_{i=1}^t \frac{\mathbb{E}[(Y^i)^2]}{\mathbb{E}[Y^i]^2}-1
=\prod_{i=1}^t \left(1+\frac{\text{Var}[Y^i]}{\mathbb{E}[Y^i]^2}\right)-1\leq O(\epsilon^2).
\end{align*}
Then by Chebychev's inequality $Y$ does not deviate much from $\prod_{i=1}^t \mathbb{E}[Y^i]$ with high probability, which by \eqref{eq:delta bound} and our choice of $\delta$ does not deviates much from $\prod_{i=1}^t \hat{p}_i$. 
More precisely, $Y$ will not deviate more than an $\exp(O(\epsilon))$ multiplicative factor from $\prod_{i=1}^t \hat{p}_i$ with high probability, as desired.

We need to access the sampler $O(t/\epsilon^2)$ many times with $\delta=O(\epsilon/t)$ to compute each $Y^i$. So this gives a total running time of $O(n^2 \epsilon^{-2} T(\epsilon/n, n,m))$.
This concludes the proof sketch.
\end{proof}


\section{Slow mixing of the flow chain}
\label{se:slow}
In this section we show that the flow Markov chain cannot mix rapidly for all $x \in (0,1)$. We do this by 
using the duality of our Markov chain on flows and Glauber dynamics of the Potts model on the planar grid (although the duality holds more generally on planar graphs). A result of Borgs, Chayes, and Tetali \cite{BCT12} for slow mixing of the Glauber dynamics of the Potts model on the grid (below a critical temperature) then immediately implies slow mixing of our flows Markov chain at the same temperature.

Given a graph $G=(V,E)$, let $\mathcal{F}_q(G)$ be the set of $\Z_q$-flows on $G$ and let $\Omega_q(G)$ be the set of $\tau: V \rightarrow [q]$ of $q$-spin configurations on $G$. Clearly $|\Omega_q(G)| = q^{|V|}$ and, as noted earlier, $|\mathcal{F}_q(G)| = q^{|E| - |V|-1}$.

Recall that the Glauber dynamics for the $q$-state Potts model for a graph $G$ and parameter $x$ is the following Markov chain with state space $\Omega_q(G)$. Given that we are currently at state $\sigma \in \Omega_q(G)$, we pick a vertex $v \in V$ uniformly at random and update its state as follows: we choose the new state to be $i$ with probability $x^{m(i)}/Z_v$, where $m(i)$ is the number of neighbours of $v$ that have state $i$ in $\sigma$, and $Z_v = \sum_i x^{m(i)}$.

Let $G=(V,E)$ be the $((L+1) \times (L+1))$-grid and $H=(V',E')$ the $(L \times L)$-grid. One can easily check that $|V'| = |E| - |V| + 1$ and so $|\Omega_q(H)| = |\mathcal{F}_q(G)|$. There is a natural bijection $\phi: \Omega_q(H) \rightarrow \mathcal{F}_q(G)$ defined as follows. First note that $H$ is the planar dual of $G$ (ignoring the outer face of $G$). Using this, write $v_1, \ldots, v_{L^2}$ for the vertices of $H$ and $C_1, \ldots, C_{L^2}$ for the corresponding faces (i.e.\ $4$-cycles) of $G$. Given $\sigma \in \Omega_q(H)$, let $\phi(\sigma) = \sum_{i = 1}^{L^2}\sigma(v_i)C_i$. We see that $\phi$ is injective since the $C_i$ form a basis of the cycle space of $G$, and hence $\phi$ must be bijective.

Now it is easy to check that the $q$-state Potts Glauber dynamics on $H$ is equivalent to the $\mathbb{Z}_q$-flow Markov chain on $G$  (where both chains have the same interaction parameter, say $x$) via the correspondence $\phi$ between their state spaces. In other words if $P$ and $Q$ are their respective transition matrices then $P_{\sigma_1 \sigma_2} = Q_{\phi(\sigma_1) \phi(\sigma_2)}$ for all $\sigma_1, \sigma_2 \in \Omega_q(H)$.

Borgs, Chayes, and Tetali \cite{BCT12} showed that the mixing time of the Glauber dynamics of the $q$-state Potts model on the $L \times L$ grid with interaction parameter $x = e^{-\beta}$ is bounded below by $x^{CL}$ for some constant $C$ when $\beta$ is above the critical threshold for the grid, i.e.\ $\beta \geq \beta_0(\Z^2) = \frac{1}{2} \log q + O(q^{-1/2})$. In particular this shows the same exponential lower bound on the mixing time  for the $\mathbb{Z}_q$-flow Markov chain (for the same interaction parameter $x$) on the $(L+1) \times (L+1)$-grid.  

\subsection*{Acknowledgement}
We are grateful to an anonymous referee for several useful suggestions.

\bibliographystyle{plain}
\bibliography{markov}

\end{document}